\newcommand\Curr{\mathop {\fam 0 Cur}\nolimits}
\newcommand\Virr{\mathop {\fam 0 Vir}\nolimits}
\newcommand\Cend{\mathop {\fam 0 Cend}\nolimits}
\newcommand\Coeff{\mathop {\fam 0 Coeff}\nolimits}
\newcommand\gc{\mathop {\fam 0 gc}\nolimits}
\newcommand\End{\mathop {\fam 0 End}\nolimits}
\newcommand\codim{\mathop {\fam 0 codim}\nolimits}
\newcommand\wt{\mathop {\fam 0 wt}\nolimits}
\newcommand\Ress{\mathop {\fam 0 Res}\nolimits}
\newcommand\Idd{\mathop {\fam 0 Id}\nolimits}
\newcommand\rank{\mathop {\fam 0 rank}\nolimits}
\newcommand\charact{\mathop {\fam 0 char}\nolimits}
\newcommand\diag{\mathop {\fam 0 diag}\nolimits}
\newcommand\sll{\mathop {\fam 0 sl}\nolimits}
\newcommand\oo[1]{\mathrel {\circ_{#1}}}
\newcommand\so[1]{\mathrel {{\scriptscriptstyle\square}_{#1}}}
\renewcommand\mod{\mathrel {\fam 0 mod}}
\let\citep=\cite
\newcommand\Cset{\mathbb C}
\newcommand\Zset{\mathbb Z}
\newtheorem{thm}{Theorem}
\newtheorem{lem}[thm]{Lemma}
\newtheorem{prop}[thm]{Proposition}
\newtheorem{cor}[thm]{Corollary}
\theoremstyle{definition}
\newtheorem{defn}[thm]{Definition}
\newtheorem{exmp}[thm]{Example}
\numberwithin{equation}{section}
\numberwithin{thm}{section}
\begin{document}

\title{Universally defined representations of Lie conformal superalgebras}


\author{Pavel Kolesnikov}
\address{Sobolev Institute of Mathematics,
Akad. Koptyug prosp., 4, Novosibirsk, 630090, Russia}

\email{pavelsk@math.nsc.ru}

\begin{abstract}
We distinguish a class of irreducible
finite representations of conformal Lie (super)algebras. These representations
(called universally defined) are the simplest ones from the computational
point of view: a universally defined representation of a conformal Lie
(super)algebra $L$ is completely determined by commutation relations of $L$
and by the requirement of associative locality of generators.
We describe such representations for conformal superalgebras $W_n$, $n\ge 0$,
with respect to a natural set of generators. We also consider the problem
for superalgebras $K_n$. In particular, we find a universally defined
representation for the Neveu--Schwartz conformal superalgebra $K_1$
and show that the analogues of this representation for $n\ge 2$
are not universally defined.
\end{abstract}

\keywords{conformal superalgebra,  irreducible representation,
universal envelope}

\subjclass{16S32, 16S99, 16W20}

\maketitle

\section{Introduction}

Conformal Lie (super)algebras, as introduced in \citep{K1},
 provide a formal language to operate with certain
infinite-dimensional Lie (super)algebras in conformal field theory
and (super)string theory. From the algebraic point of view,
a conformal algebra is an algebraic system based on a linear space
$C$ over $\Cset $
endowed with a family of bilinear operations (conformal products)
$(\cdot \oo{n} \cdot)$, where $n$ ranges over the set
$\Zset_+$ of non-negative integers.

In conformal field theory, $C$ is assumed to be a space of
pairwise mutually local fields; if $a,b\in C$ then the sequence
$a\oo{n} b$, $n\in \Zset_+$, encodes the singular part of the operator
product expansion (OPE) of $a$ and~$b$. The properties of OPE
give rise to the axioms of (Lie) conformal algebras \citep{K1,K2}.
Roughly speaking, a Lie conformal algebra is a ``singular part''
of a vertex operator algebra~\citep{Bor,FLM}.

The problem of classification of simple and semisimple
Lie conformal superalgebras of finite type
(i.e., finitely generated modules over $\Cset [D]$)
was solved in
\citep{CK2,DK,FK,FKR}, see also \citep{K3}.

An important role in conformal field theory and representation
theory belongs to vertex operator realizations of infinite-dimensional
Lie (super)algebras. These constructions lead to the notion of a
conformal module \citep{CK,K1} which is equivalent to the notion
of a module over a conformal algebra.
Irreducible conformal modules over Lie conformal superalgebras have been
studied in a series of papers. In particular, the complete list
of irreducible modules over several simple conformal Lie superalgebras
of finite type was found in \citep{CK,CL}; extensions of such modules
were described in \citep{CKW1,CKW2}.

In this paper, we develop a combinatorial approach to representations
of Lie conformal superalgebras. In the case of ordinary algebras,
every representation of a Lie algebra $L$ gives rise to a representation
of its universal associative enveloping  algebra $U(L)$.
This is not the case for conformal algebras since there is no
universal associative enveloping conformal algebra for a Lie conformal
algebra.

However, given a conformal Lie (super)algebra $C$
generated by its subset $B$
one may consider a class of associative envelopes of $C$  with a
restriction on the locality function on $B$ \citep{Ro2}. There exists
the universal envelope in that class, so we obtain a lattice
of universal envelopes of $C$. Every irreducible conformal $C$-module of
finite type corresponds to a simple homomorphic image
of a universal envelope $U$ of $C$, so the first points
of interest are the minimal (non-trivial) elements of the lattice of
universal envelopes, namely, simple universal envelopes of
at most linear growth.  Every universal envelope of this kind
defines a representation which is called universally defined.

We describe all universally defined representations
of Lie conformal superalgebras $W_n$, $n\ge 0$, with respect
to a natural set of generators. It turns out that there exists
only one universally defined representation of
$W_0$ (the Virasoro conformal algebra) and two inequivalent
representations of $W_n$, $n>0$.
We also show that the induced representations of $K_n\subset W_n$, $n\ge 1$,
are irreducible for any $n\ne 2$,
equivalent to a universally defined representation for $n=1$
(i.e., for the Neveu--Schwartz conformal superalgebra),
but for $n\ge 2$ neither of these representations is universally defined.

\section{Main definitions}

\begin{defn}[Kac, 1997]\label{defn-conf}
A {\em conformal algebra\/} is a linear space $C$
over a field $\Bbbk$ $(\charact \Bbbk = 0)$
endowed with
a linear map $D:C\to C$ and a family of linear maps
$\oo{n}: C\otimes C \to C$ satisfying the following axioms:

\begin{itemize}
\item[]{\rm(C1)} for any $a,b\in C$ there exists $N=N(a,b)$
such that $a\oo{n} b=0$ for all $n\ge N$;
\item[]{\rm(C2)} $Da\oo{n} b = -n a\oo{n-1} b$;
\item[]{\rm(C3)} $a\oo{n}Db = D(a\oo{n} b) + na\oo{n-1} b$.
\end{itemize}

\noindent
If $C$ is a finitely generated $\Bbbk[D]$-module then
$C$ is said to be a conformal algebra of {\em finite type}
(or {\em finite\/} conformal algebra).
\end{defn}

Axiom (C1) allows to define the so-called locality function
$N_C: C\times C \to \Zset_+$,
\[
 N_C(x,y) = \min\{N\in \Zset_+ \mid x\oo{n}y =0\ \mbox{for all}\ n\ge N\}.
\]

A conformal algebra $C$ is said to be
$\Zset_2$-graded
if $C=C_0\oplus C_1$ as a $\Bbbk[D]$-module and
$C_i\oo{n} C_j \subseteq C_{(i+j)\mod 2}$.
By $p(a)$ we denote the parity of $a\in C$:
$p(a)=i$ if $a\in C_i$, $i=0,1$.

For any conformal algebra
$C$ there exists an ordinary (non-associative, in general) algebra
$A$ such that $C$ can be embedded into
the space of formal power series
$A [[z,z^{-1}]]$, where $D=\partial_z$ and
the $\oo{n}$-products on $A[[z,z^{-1}]]$ are given by
\[
a(z)\oo{n} b(z) = \Ress_{w=0} a(w)b(z)(w-z)^n,
\quad n\in \Zset_+,
\]
where $\Ress_{w=0} f(z,w)$ stands for the coefficient of $w^{-1}$
in $f(z,w)$.
Such an algebra $A$ is not unique, but there exists a universal one
denoted by $\Coeff C$.
Namely \citep{K2,Ro1},
$\Coeff C = \Bbbk[t,t^{-1}]\otimes _{\Bbbk[D]} C $
as a linear space (here we consider $\Bbbk[t,t^{-1}]$
as a right $\Bbbk[D]$-module thinking of $D$ as of $-\frac{d}{dt}$).
Let us write $a(n)$ for $t^n\otimes _{\Bbbk[D]}a$, $a\in C$, $n\in \Zset$.
The multiplication on $\Coeff C$ is well-defined by
\[
 a(n)b(m) = \sum\limits_{s\ge 0} \binom{n}{s} (a\oo{n-s} b)(m+s).
\]
The algebra $\Coeff C$ is called the
{\em coefficient algebra\/} of~$C$.

There is a correspondence between identities on $\Coeff C$
and conformal identities on~$C$.
In particular, $\Coeff C$
is associative if and only if $C$ satisfies
\begin{equation}\label{assoc-l}
(a\oo{n} b)\oo{m} c = \sum\limits_{s\ge 0} (-1)^s
 \binom{n}{s}
a\oo{n-s} (b\oo{m+s} c), \quad a,b,c\in C, \ n,m\in \Zset_+.
\end{equation}
The system of relations (\ref{assoc-l}) is equivalent to
\begin{equation}\label{assoc-r}
a\oo{n} (b\oo{m} c) = \sum\limits_{s\ge 0}
 \binom {n}{s} (a\oo{n-s} b)\oo{m+s} c,
  \quad a,b,c\in C, \ n,m\in \Zset_+.
\end{equation}

If $C$ is $\Zset_2$-graded then $\Coeff C$ inherits the grading:
$p(a(n))=p(a)$. Coefficient algebra $\Coeff C$ is a Lie superalgebra if and only if
$C$ satisfies
\begin{eqnarray}
& a\oo{n} b + (-1)^{p(a)p(b)} \{a\oo{n} b\}=0,
                                       \label{anticomm} \\
& a\oo{n}(b\oo{m} c) - (-1)^{p(a)p(b)} b\oo{m}(a\oo{n} c) =
 \sum\limits_{s\ge 0}
  \binom{n}{s} (a\oo{n-s} b)\oo{m+s} c,
                                    \label{Jacobi}
\end{eqnarray}
where $\{ b\oo{n} a\} = \sum\limits_{s\ge 0}
\frac{(-1)^{n+s}}{s!} D^{s}(b\oo{n+s} a)$,
$n,m\in \Zset_+$.

A conformal algebra $C$ is called associative, if   $\Coeff C$
is associative, i.e., if $C$ satisfies (\ref{assoc-l})
or (\ref{assoc-r}).
Analogously, $C$ is called Lie conformal superalgebra,
if $\Coeff C$ is a Lie superalgebra, i.e., if $C$ satisfies
(\ref{anticomm}) and (\ref{Jacobi}).
In order to distinguish notations, we will denote conformal products
in associative conformal algebras by $(\cdot\oo{n}\cdot)$
and in Lie conformal algebras by $(\cdot\so{n}\cdot)$,
$n\in {\Zset_+}$.

\begin{prop}[e.g. Kac, 1999]\label{prop adjoint}
Let $C$ be an associative $\Zset_2$-graded conformal algebra. Then the
same $\Bbbk[D]$-module $C$ endowed with new operations
$a\so{n} b = [a\oo{n} b]$, where
\begin{equation}\label{commutator}
[a\oo{n}b]= a\oo{n} b-(-1)^{p(a)p(b)}\{b\oo{n} a\},
   \quad a,b\in C,\ n\in \Zset_+,
\end{equation}
is a Lie conformal superalgebra denoted by $C^{(-)}$.
\end{prop}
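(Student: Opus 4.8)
The plan is to verify axioms (\ref{anticomm}) and (\ref{Jacobi}) directly for the new products $\so{n}$ defined by (\ref{commutator}), using only the associativity identities (\ref{assoc-l})--(\ref{assoc-r}), axioms (C1)--(C3), and the fact that the bracket $\{b\oo{n}a\}$ is built out of the $D$-action and the original $\oo{n}$-products. First I would record the elementary properties of the auxiliary operation $\{\,\cdot\oo{n}\cdot\,\}$: that it again satisfies (C1) (since $D^s$ does not raise the index-to-zero threshold and the sum over $s$ is finite by (C1) for $\oo{n}$), and that it interacts with $D$ on both slots in the expected way, which follows from (C2)--(C3) by a short generating-function computation. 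It is convenient here to pass to the $\lambda$-bracket / formal Fourier transform notation or, equivalently, to the coefficient algebra $\Coeff C$: since $\Coeff C$ is associative and $\Zset_2$-graded, the commutator $[a(n),b(m)] = a(n)b(m) - (-1)^{p(a)p(b)}b(m)a(n)$ makes $\Coeff C$ into a Lie superalgebra by the classical fact, and one checks that the product $a\so{n}b$ defined by (\ref{commutator}) is precisely the one whose coefficient-algebra multiplication is this commutator. Granting the correspondence between conformal identities on $C$ and identities on $\Coeff C$ stated in the excerpt, the axioms (\ref{anticomm}) and (\ref{Jacobi}) for $\so{n}$ are then equivalent to super-anticommutativity and the super-Jacobi identity for the commutator on $\Coeff C$, which hold because $\Coeff C^{(-)}$ is a Lie superalgebra.

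Concretely, for (\ref{anticomm}) I would compute $\{a\so{n}b\}$ from the definition: expanding $a\so{n}b = a\oo{n}b - (-1)^{p(a)p(b)}\{b\oo{n}a\}$ and then applying the $\{\,\cdot\,\}$ operation termwise, the double application $\{\{b\oo{n}a\}\}$ collapses back to $(-1)^{\text{something}}\,b\oo{n}a$ up to sign — this is the involutivity of the "skew" operation, a standard identity that one verifies by a binomial-coefficient manipulation using (C2)--(C3). Combining the two contributions gives $a\so{n}b + (-1)^{p(a)p(b)}\{a\so{n}b\} = 0$ after the dust settles. For the Jacobi-type axiom (\ref{Jacobi}) I would substitute (\ref{commutator}) into each of the three terms $a\so{n}(b\so{m}c)$, $b\so{m}(a\so{n}c)$, and $(a\so{n-s}b)\so{m+s}c$, then repeatedly invoke (\ref{assoc-l})/(\ref{assoc-r}) to move all parentheses to one side; the associativity identities are exactly what is needed to reassociate the nested $\oo{n}$-products, and the anticommutativity-type bracket identities handle the terms where $\{\,\cdot\,\}$ appears.

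The main obstacle I expect is purely bookkeeping: the skew-bracket $\{b\oo{n}a\}$ is an infinite (though finitely supported) sum involving $D^s$ and shifted indices, so substituting it into a nested product produces a triple sum of binomial coefficients that must be resummed to the desired shape. Using the Vandermonde-type identity $\sum_s \binom{n}{s}\binom{m}{k-s} = \binom{n+m}{k}$ and the $D$-shift relations to normalize, these sums telescope correctly, but one must be careful with the signs $(-1)^s$, $(-1)^{p(a)p(b)}$, and the parities of intermediate products $p(a\oo{k}b) = p(a)+p(b) \bmod 2$. Rather than grind through this, I would present the argument via the coefficient algebra: show once and for all that the commutator on the associative superalgebra $\Coeff C$ corresponds under the dictionary $a(n)\mapsto t^n\otimes a$ to the operation (\ref{commutator}) on $C$, cite that $(\Coeff C)^{(-)}$ is a Lie superalgebra, and invoke the conformal-identity $\leftrightarrow$ coefficient-identity correspondence to conclude that $C^{(-)}$ satisfies (\ref{anticomm}) and (\ref{Jacobi}). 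This reduces the proposition to the classical fact that an associative superalgebra is a Lie superalgebra under the supercommutator, plus a one-line check that $D$ is still a derivation of the new products, which is immediate from (C2)--(C3) since those axioms only involve $D$ and the $\Bbbk[D]$-module structure, both unchanged.
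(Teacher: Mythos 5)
The paper states this proposition as a known result (attributed to Kac) and gives no proof of its own, so there is nothing internal to compare against; your argument is correct and is the canonical one given the paper's conventions, since the paper \emph{defines} a Lie conformal superalgebra precisely by the condition that $\Coeff C$ be a Lie superalgebra and explicitly asserts the dictionary between conformal identities on $C$ and polynomial identities on $\Coeff C$. Your reduction --- check that the $\so{n}$-products of (\ref{commutator}) induce the supercommutator on the associative superalgebra $\Coeff C$, then invoke the classical fact that an associative superalgebra is a Lie superalgebra under the supercommutator --- is exactly the intended route; the only loose end is your hedge that $\{\{b\oo{n}a\}\}$ returns $b\oo{n}a$ ``up to sign'': the skew operation is an honest involution (no sign), and that is what makes the super-anticommutativity check close up.
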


\begin{defn}[Kac, 1997]
Let $V$ be a left (unital) $\Bbbk[D]$-module. A {\em conformal
endomorphism\/} is a $\Bbbk$-linear map
$a: \Bbbk[D] \to \End_{\Bbbk} V$ such that
\begin{itemize}
\item[]{\rm(i)}
 $\codim \{h\in \Bbbk[D] \mid a(h)v=0\} <\infty$ for any $v\in V$;
\item[]{\rm(ii)}
 $a(h)Dv = Da(h)v + a(h')v$, $h'$ is the ordinary derivative of~$h$.
\end{itemize}
\end{defn}

Let $\Cend V$ denotes the set of all conformal endomorphisms.
One may define operations $D$ and $(\cdot\oo{n}\cdot)$,
$n\in \Zset_+$, on $\Cend V$ as follows:
\begin{eqnarray}
& (Da)(h) = -a(h'),\nonumber \\
& (a\oo{n} b)(h) = \sum\limits_{s\ge 0} (-1)^s \binom{n}{s}
 a(D^{n-s})b(D^{s}h).\nonumber
\end{eqnarray}
Then (C2), (C3) and (\ref{assoc-l}) hold. If $V$ is a
finitely generated $\Bbbk[D]$-module then (C1) also holds,
so $\Cend V$ turns into an associative conformal algebra.
If $V$ is a free $N$-generated $\Bbbk[D]$-module then
$\Cend V$ is denoted by $\Cend_N$. The structure of this algebra was
particulary considered in \citep{BKL1}.

\begin{defn}[Cheng et al., 1997b; Kac, 1999]\label{defn conf_mod}
Let $C$ be a Lie conformal
superalgebra.
A {\em representation\/} of $C$ on a $\Bbbk[D]$-module $V$
is a linear map $\rho : L\to \Cend V$ such that
\begin{eqnarray}
& \rho(Da)=D\rho(a),\nonumber \\
& \rho (a\so{n} b) = (\rho(a)\oo{n}\rho(b)) - (-1)^{p(a)p(b)} \{\rho(b)\oo{n}\rho(a)\}
\end{eqnarray}
for all $a,b\in L$, $n\in \Zset_+$.
If $V$ is a finitely generated $\Bbbk[D]$-module then the representation $\rho $
is said to be {\em finite}.
\end{defn}

\section{Free conformal algebras and the Composition-Diamond lemma}

The study of free conformal algebras was initiated in \citep{Ro1},
where free associative and Lie conformal algebras were
constructed.

For a set of generators $B$ and a locality function
$N:B\times B \to \Zset_+$ there exists an associative (Lie) conformal
algebra $F_N(B)$ such that for any associative (resp., Lie) conformal
algebra $C$ and for any map $\iota: B\to C$ such that
$N_C(\iota(a),\iota(b))\le N(a,b)$, $a,b\in B$,
there exists a unique homomorphism $\varphi :F_N(B)\to C$
such that $\varphi (a)=\iota(a)$, $a\in B$.

Let us present the construction of the free associative conformal
algebra with a constant locality function~$N$ (c.f. \cite{BFK1}).
Consider the (ordinary) free associative algebra $\Bbbk\langle v,B\rangle $,
where $v$ is a formal variable, $v\notin B$.
Free $\Bbbk[D]$-module
$F(B)=\Bbbk[D]\otimes \Bbbk\langle v,B\rangle $
can be endowed with conformal products by setting
\[
(1\otimes f)\oo{n}(1\otimes g) = 1\otimes f\frac{\partial^n g }{\partial v^n},
\]
for $f,g\in \Bbbk\langle v,B\rangle $, and then by making use of (C2), (C3).
These operations turn $F(B)$ into an associative conformal algebra.
Conformal subalgebra of $F(B)$ generated by
$\{v^{N-1}a \mid a\in B\}$ is isomorphic to $F_N(B)$.

The following monomials (normal words)
\begin{eqnarray}\label{norm_w}
& w=D^s (a_1\oo{n_1} (a_2 \oo{n_2} \dots \oo{n_{k-1}}(a_k\oo{n_k} a_{k+1})\dots )),\\
& s \ge 0,\quad  a_i\in B,\quad  0\le n_i<N,\nonumber
\end{eqnarray}
form a linear basis of $F_N(B)$.
Linear combinations of normal words are called conformal polynomials.

Assume $w$ to be a normal word (\ref{norm_w}). By $\wt(w)$
we denote the following string:
\begin{equation}\label{weight}
 \wt(w) = (n_1,a_1,\dots ,a_k, n_k, a_{k+1},s).
\end{equation}
If $B$ is endowed with a linear order $\le $
such that $(B,\le )$ is a well-ordered set then we can expand this order to
normal words by the rule
\begin{equation}\label{ord}
 v\le w \iff \wt(v)\le \wt(w),
\end{equation}
comparing strings (\ref{weight}) by their length first and then
lexicographically. For a conformal polynomial $f\in F_N(B)$
denote by $\bar f$ its principal word:
\[
 f =\alpha \bar f + \sum\limits_{i} \alpha _iu_i,
    \quad \alpha \ne 0, \ u_i<\bar f.
\]

Every associative conformal algebra $C$ generated by $B$ such that
$N_C(a,b)\le N$, $a,b\in B$, is isomorphic to the quotient
algebra
$F_N(B)/I$ for some ideal $I$.
As usual, a set $S\subseteq F_N(B)$
generating $I$ as an ideal of $F_N(B)$ is called
a set of defining relations of $C$.
There is a natural problem: given a set $S$ of defining relations of $C$,
how to decide whether two conformal polynomials are equal in $C$?
In general, this problem is algorithmically unsolvable \citep{BFK1},
but there is a generalized (infinite) algorithm to treat it,
somewhat similar to the one of
\citep{Sh62,Buch,B76,KL}.

Let $S\subseteq F_N(B)$ be a set of conformal polynomials.
A normal word $w$ is said to be $S$-reduced if
$w$ cannot be presented as a principal part of
\[
 D^s(u\oo{n} f\oo{m} v) \quad \mbox{or}\quad  D^s(u\oo{n}g),
\]
where $f,g\in S$, $f$ is a $D$-free polynomial, $u$ and $v$ are
normal words, $0\le n,m<N$, $s\ge 0$.

In \citep{BFK1,BFK2}, the notion of a composition $(f,g)_w$
of conformal polynomials $f$, $g$ was introduced. In general, there are six
types of compositions of such polynomials. A set
$S\subseteq F_N(B)$ is called a Gr\"obner--Shirshov basis (GSB) if it is
closed under all compositions.

\begin{thm}[\cite{BFK2}]\label{CDlemma}
Let $S$ be a set of defining relations of an associative conformal algebra
$C$. If $S$ is a GSB then $S$-reduced normal words form a linear basis
of $C$. The converse is true if $S$ consists of $D$-free polynomials.
\end{thm}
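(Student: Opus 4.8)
The plan is to follow the standard Gr\"obner--Shirshov (Composition--Diamond) pattern, adapted to the free associative conformal algebra $F_N(B)$, proving the two directions separately.

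\emph{Spanning.} Write $C=F_N(B)/I$ with $I$ the ideal generated by $S$. First I would check that the order $\le$ on normal words defined by (\ref{ord}) is a well-order: strings (\ref{weight}) are compared by length first, for a fixed length the entries $n_i$ lie in the finite set $\{0,\dots,N-1\}$ and the $a_i$ in the well-ordered set $(B,\le)$, while the trailing $s$ ranges over the well-ordered set $\Zset_+$; hence $\le$ is a well-order. Next comes an auxiliary ``leading word of a product'' lemma: for normal words $u,v$ and $0\le n<N$, when $u\oo{n}v$ is rewritten over the basis (\ref{norm_w}) using (C2), (C3) and the right-multiplication rule, its principal word is a prescribed normal word built from $u$ and $v$, all other terms are strictly smaller, and this assignment is monotone in each argument. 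Granting this, if a normal word $w$ is \emph{not} $S$-reduced then $w=\overline{D^s(u\oo{n}f\oo{m}v)}$ or $w=\overline{D^s(u\oo{n}g)}$ for suitable $D$-free $f,g\in S$, and subtracting the corresponding element of $I$ expresses $w$, modulo $I$, through normal words $<w$. Induction on $\le$ then shows every normal word is congruent mod $I$ to a linear combination of $S$-reduced normal words, so these span $C$.

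\emph{Linear independence.} Introduce the \emph{$S$-words}: the elements $D^s(u\oo{n}f\oo{m}v)$, $D^s(u\oo{n}f)$, $D^s(f\oo{m}v)$, $D^sf$ with $f\in S$ $D$-free, $u,v$ normal, $0\le n,m<N$, $s\ge 0$. Using (\ref{assoc-l})--(\ref{assoc-r}) one shows that $S$-words span $I$ as a $\Bbbk$-space. The core is the Composition Lemma: if $S$ is a GSB, then for any two $S$-words $h_1,h_2$ with $\bar h_1=\bar h_2=w$ the difference $h_1-h_2$ is a linear combination of $S$-words with principal word $<w$. Its proof is a case analysis on the ways two $S$-words overlap inside $F_N(B)$ --- the left factor of one sitting inside the other, nested occurrences at various depths of the right-normed monomials (\ref{norm_w}), an occurrence inside a single factor of another, and so on, which is precisely where the six composition types arise --- reducing each case to an iterated conformal product of a composition $(f,g)_w$, which reduces to zero because $S$ is closed under compositions. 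With this in hand, a routine extremal argument (among representations $f=\sum_i\alpha_i h_i$ by $S$-words, minimize $\max_i\bar h_i$ and then the number of $h_i$ attaining it; if this maximum exceeds $\bar f$ at least two top terms occur and the Composition Lemma lowers the count, a contradiction) shows every nonzero $f\in I$ has $\bar f=\bar h$ for some $S$-word $h$, hence $\bar f$ is not $S$-reduced. Thus a conformal polynomial all of whose monomials are $S$-reduced and which lies in $I$ vanishes, so the $S$-reduced normal words are linearly independent in $C$.

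\emph{Converse and main obstacle.} Assume $S$ consists of $D$-free polynomials and the $S$-reduced normal words form a basis of $C$. For any composition $(f,g)_w$ of elements of $S$ we have $(f,g)_w\in I$; the reduction procedure of the spanning part rewrites it, modulo $I$, as a linear combination $r$ of $S$-reduced normal words, and since $r\in I$ represents $0$ in $C$, linear independence forces $r=0$; so every composition reduces to zero modulo $S$ and $S$ is a GSB. The $D$-freeness is essential both here and in the definitions of reduction and composition: by (C2), $Da\oo{n}b=-na\oo{n-1}b$, so $D$ can lower conformal indices and collapse leading words in ways the six composition types do not track, and the converse genuinely fails without this hypothesis. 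I expect the technical bottleneck to be the Composition Lemma together with the preliminary ``leading word of a product'' lemma: one must carefully enumerate the six inequivalent overlap patterns in $F_N(B)$ and verify in each that the discrepancy is absorbed, modulo strictly smaller $S$-words, by a composition $(f,g)_w$; everything else (well-ordering, the spanning induction, the extremal argument) is routine once those lemmas and the bookkeeping of (C2), (C3), (\ref{assoc-l}) are settled.
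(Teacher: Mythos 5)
The paper does not prove this statement at all: Theorem~\ref{CDlemma} is imported verbatim from \cite{BFK2} (Bokut--Fong--Ke, J.~Algebra 272 (2004)), so there is no in-paper proof to compare yours against. Measured against the actual proof in that reference, your outline has the right architecture --- well-ordering of normal words, reduction to $S$-reduced words for spanning, a Composition Lemma plus an extremal argument for independence, and the basis-forces-triviality argument for the converse --- and this is indeed how the result is established there.

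However, as a proof your proposal has a genuine gap: everything that makes the conformal case different from Shirshov's classical lemma is concentrated in the parts you defer. You never define the six composition types, yet the entire content of the hypothesis ``$S$ is a GSB'' lives in that list; without it the Composition Lemma is an assertion, not an argument. More concretely, there is an unreconciled mismatch between your two halves. Your independence argument spans the ideal $I$ by $S$-words of four shapes, including $D^s(f\oo{m}v)$ and $D^sf$ with no left factor, whereas the paper's notion of $S$-reduced excludes only principal parts of $D^s(u\oo{n}f\oo{m}v)$ and $D^s(u\oo{n}g)$. For the extremal argument to conclude that $\bar f$ is non-$S$-reduced for every $0\ne f\in I$, you must show that the leading words of the left-factor-free $S$-words are already captured by the two patterns with a left factor $u$; this is exactly what the ``right multiplication'' and ``$D$'' composition types in \cite{BFK2} are designed to handle (note that $g$ in $D^s(u\oo{n}g)$ is \emph{not} required to be $D$-free, precisely because (C3) pushes $D$'s on the right factor outward --- another asymmetry your sketch does not engage with). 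Likewise the ``leading word of a product'' lemma is delicate here: rewriting $u\oo{n}v$ into the right-normed basis (\ref{norm_w}) uses (\ref{assoc-r}) and can generate indices $\ge N$ that vanish, so monotonicity of leading terms is not automatic and must be proved with the locality bound in hand. Your converse direction is essentially right. In short: correct skeleton, but the theorem's substance is in the enumeration and verification of the overlap cases, which the proposal acknowledges but does not supply.
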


\section{Associative envelopes of Lie conformal superalgebras}

Let $L$ be a conformal Lie superalgebra with operations
$D$ and $(\cdot\so{n}\cdot)$,
$n\in {\Zset_+}$.

\begin{defn}
An associative envelope of $L$ is a pair
 $(A,\varphi)$, where $A$ is an associative conformal algebra,
$\varphi: L\to A$ is a $D$-invariant linear map such that
\[
\varphi(a\so{n}b)
= \varphi (a)\oo{n}\varphi (b) - (-1)^{p(a)p(b)} \{\varphi (b)\oo{n}\varphi (a)\},
\quad a,b\in L,\ n\in \Zset_+,
\]
and $A$ is generated by $\varphi(L)$ as a conformal algebra.
\end{defn}

Note that $\varphi $ is not necessarily injective.

Two associative envelopes
$(A_1, \varphi _1)$, $(A_2, \varphi _2)$ of $L$ are isomorphic
if there exists an isomorphism $\psi: A_2\to A_1$ of associative
conformal algebras
such that
$\psi\varphi _2 = \varphi _1$.

The set ${\mathcal E}(L)$ of isomorphism classes of associative envelopes
can be ordered in the usual way:
$(A_1, \varphi _1) \preceq (A_2, \varphi _2)$ if there exists a homomorphism
$\psi : A_2\to A_1$
such that
$\psi\varphi _2 = \varphi _1$.

In contrast to the case of ordinary algebras, the
partially ordered set $(\mathcal E(L), \preceq )$
has no greatest element.
The main reason is the requirement of locality of elements $\varphi(L)$ in an
associative envelope $A$. However, there is a way to fix this problem \citep{Ro2}.

Let $B$ be a set of generators of $L$, and let
$N: B\times B \to {\Zset}_+$ be a fixed function.
Denote by ${\mathcal E}_N(L,B) $ the set of all associative envelopes
$(A,\varphi) $ of $L$ such that
$N_A(\varphi(a),\varphi(b)) \le N(a,b)$ for all $a,b\in B$.
Then ${\mathcal E}_N(L,B) $ has the greatest element (the universal
associative envelope with respect to generators $B$ and locality $N$)
denoted by $(U_N(L,B),\iota_N(L,B))$ or just $U_N(L,B)$, for short.

Let us state here the construction of $U_{N}(L,B)$.
Consider the coefficient algebra
$\mathcal L=\Coeff L$, this is a Lie
superalgebra generated by $\{b(n)\mid b\in B,\, n\in \Zset\}$.
Denote by $\mathcal I_N(B)$ the ideal of $U(\mathcal L)$ generated by
\begin{equation}\label{N-def}
\sum\limits_{s\ge 0} (-1)^s \binom{N(a,b)}{s}
  a(n-s)b(m+s),\quad  a,b\in B,\ n,m\in \Zset.
\end{equation}
Then formal power series
$\bar a(z)= \sum\limits_{n\in \Zset} (a(n)+\mathcal I_N(B)) z^{-n-1}
 \in U(\mathcal L)/\mathcal I_N(B)[[z,z^{-1}]] $
are pairwise mutually local, therefore, they generate an associative
conformal algebra that is $U_N(L,B)$.

Another way to construct $U_N(L,B)$ is to use a
presentation of $L$ by generators $B$ and defining relations $S_{Lie}$ \citep{BFK2}.
Consider the ideal $I_N(B)$ of $F_N(B)$ generated by
$S$, where $S$ is obtained from
$S_{Lie}$ by rewriting $(\cdot\so{n}\cdot) = [\cdot \oo{n} \cdot]$ via (\ref{commutator}).
Then  $U_N(L,B)\simeq F_N(B)/I_N(B)$.

Note that if $B$ consists of homogeneous elements of $L$ then
$U_N(L,B)$ inherits the grading, so
$\iota_N(L,B):L\to U_N(L,B)^{(-)}$
is a homomorphism of Lie conformal superalgebras.

A {\em superinvolution\/} of a $\Zset_2$-graded
conformal algebra $C$
is a $\Bbbk[D]$-linear map
$\sigma :C \to C$ such that
$p(\sigma (a))=p(a)$, $\sigma ^2=\Idd_C$,
$\sigma (a\oo{n} b) = (-1)^{p(a)p(b)} \{\sigma (b)\oo{n} \sigma (a)\}$,
$a,b\in C$.

\begin{lem}\label{lem lifting}
Let $L$ be a Lie conformal superalgebra generated by a subset $B$
of homogeneous elements.
Superinvolution $\sigma: L\to L$, $x\mapsto -x$,
can be expanded to $U_N(L,B)$
if and only if $N(a,b)=N(b,a)$ for all $a,b\in B$.
\end{lem}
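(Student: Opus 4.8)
The plan is to transfer the question to the coefficient-algebra level, where $U_N(L,B)$ has the explicit presentation $U(\mathcal L)/\mathcal I_N(B)$, and to check when the map $x\mapsto -x$ on generators extends to an antiautomorphism there. First I would recall that $\sigma:L\to L$, $x\mapsto -x$, is indeed a superinvolution of $L$ precisely because $L$ is a Lie conformal superalgebra: the defining relation $a\oo{n}b = -(-1)^{p(a)p(b)}\{a\oo{n}b\}$ from (\ref{anticomm}), together with $D$-linearity, is exactly the condition $\sigma(a\so{n}b) = (-1)^{p(a)p(b)}\{\sigma(b)\so{n}\sigma(a)\}$ for $\sigma = -\Idd$. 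So on $L$ itself there is no constraint; the constraint must come from the associative structure of $U_N(L,B)$.

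Next I would pass to $\mathcal L=\Coeff L$ and $U(\mathcal L)$. The map $x\mapsto -x$ on $L$ induces, via $a(n)\mapsto -a(n)$, a map on $\mathcal L$ which is the composition of the Lie (anti)automorphism $-\Idd$ with parity; on the universal associative envelope $U(\mathcal L)$ this extends uniquely to a graded antiautomorphism $\tilde\sigma$ (the ``principal antiautomorphism'' twisted by $-\Idd$), sending a product $a_1(n_1)\cdots a_k(n_k)$ to $(-1)^{k}(-1)^{\varepsilon} a_k(n_k)\cdots a_1(n_1)$ with the appropriate Koszul sign $\varepsilon$. One then has a superinvolution of $U_N(L,B)$ extending $\sigma$ if and only if $\tilde\sigma$ descends to the quotient $U(\mathcal L)/\mathcal I_N(B)$, i.e. if and only if $\tilde\sigma(\mathcal I_N(B))\subseteq \mathcal I_N(B)$. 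The main computational step is therefore to apply $\tilde\sigma$ to the generating elements (\ref{N-def}) of $\mathcal I_N(B)$ and to recognize the result: applying $\tilde\sigma$ to $\sum_{s\ge 0}(-1)^s\binom{N(a,b)}{s} a(n-s)b(m+s)$ reverses the order of the two coefficients and introduces signs, and after reindexing the sum one obtains (up to an overall nonzero scalar and sign) the element $\sum_{s\ge 0}(-1)^s\binom{N(a,b)}{s} b(m)a(n)$-type expression whose ``natural'' form is the defining element for the pair $(b,a)$ with the parameter $N(b,a)$. So $\tilde\sigma$ carries the $N(a,b)$-locality relation between $a$ and $b$ to the $N(a,b)$-locality relation between $b$ and $a$; this lies in $\mathcal I_N(B)$ exactly when $N(b,a) = N(a,b)$.

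For the converse direction I would argue that if $N(a,b)\ne N(b,a)$ for some pair then $\tilde\sigma$ cannot descend: the image under $\tilde\sigma$ of the $a,b$-relation is a nonzero multiple of an element asserting that $b(m)a(n) = (\text{lower order})$ with locality bound $N(a,b)$, and one checks, using Theorem~\ref{CDlemma} applied to the presentation $F_N(B)/I_N(B)$ (equivalently, using the PBW-type basis of $U_N(L,B)$ coming from the Composition--Diamond lemma for the defining relations $S$), that this element is nonzero in $U_N(L,B)$ whenever $N(a,b)<N(b,a)$ — it would force a lower locality between $b$ and $a$ than the algebra actually has. Hence the condition $N(a,b)=N(b,a)$ for all $a,b\in B$ is both necessary and sufficient.

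The hard part will be the second (converse) implication: showing that when $N(a,b)\ne N(b,a)$ the candidate antiautomorphism genuinely fails, rather than just failing to preserve a chosen generating set. This requires knowing that the locality function $N_{U_N(L,B)}$ on the generators is not strictly smaller than $N$ on the relevant pair, which is where one must invoke the Composition--Diamond basis (Theorem~\ref{CDlemma}) for $F_N(B)/I_N(B)$, or equivalently a PBW basis for $U_N(L,B)$, to certify that the relevant lower-locality relation does not already hold. The forward implication, by contrast, is essentially the bookkeeping of signs and reindexing sketched above and should be routine.
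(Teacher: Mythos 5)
Your forward (``if'') direction is essentially the paper's own proof: the author also passes to $\mathcal L=\Coeff L$, takes the canonical antipode $S$ of $U(\mathcal L)$ with $S(x)=-x$, observes that the generators (\ref{N-def}) of $\mathcal I_N(B)$ are carried (after your reindexing $s\mapsto N(a,b)-s$, up to sign) to the $(b,a)$-relations with the same parameter $N(a,b)$, hence lie in $\mathcal I_N(B)$ precisely when $N$ is symmetric, so that $S$ descends and restricts to the conformal subalgebra $U_N(L,B)$. Your sign bookkeeping and the remark that $-\Idd$ is automatically a superinvolution of $L$ by (\ref{anticomm}) are correct. In fact the paper proves \emph{only} this direction, and only this direction is used later (together with Lemma~\ref{superinv}) to rule out universal definedness for $K_n$, $n>2$.

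Your converse, however, contains a genuine gap which you flag as ``the hard part'' but do not close, and which cannot be closed at this level of generality. To conclude that the (unique) candidate antiautomorphism fails to descend when $N(a,b)<N(b,a)$, you must show that the image of the $(a,b)$-relation --- the $(b,a)$-locality relation with the too-small parameter $N(a,b)$ --- does \emph{not} already lie in $\mathcal I_N(B)$; equivalently, that the locality actually realized between $\iota(b)$ and $\iota(a)$ in $U_N(L,B)$ exceeds $N(a,b)$. For arbitrary $L$, $B$, $N$ this is false: the quotient can collapse (the proof of Theorem~\ref{thm-univ-W} exhibits many asymmetric $N$ with $U_N(W_n,B)=0$), and on the zero envelope the superinvolution extends trivially whatever $N$ is, contradicting the ``only if'' statement read literally. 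Moreover, Theorem~\ref{CDlemma} gives you a linear basis only after you have verified that the defining relations form a Gr\"obner--Shirshov basis, which is exactly the hard algebra-specific computation and is not available in the abstract setting of this lemma. So your plan establishes sufficiency (matching the paper) but does not yield a proof of necessity; any honest proof of the converse must either add a nondegeneracy hypothesis (e.g.\ that the induced locality on $\iota(B)$ equals $N$) or restrict to the concrete algebras where a Gr\"obner--Shirshov basis is known.
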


\begin{proof}
Consider the canonical antipode $S : U(\mathcal L) \to U(\mathcal L)$,
$\mathcal L=\Coeff L$, $S(x)=-x$ for $x\in \mathcal L$.
If $N(a,b)=N(b,a)$ for any $a,b\in B$ then the relation
(\ref{N-def}) holds under $S$, so
$S$ induces a superinvolution of
$U(\mathcal L)/\mathcal I_N(B)$ that can be expanded to the
conformal algebra $U_N(L,B)$.
\end{proof}

Any associative envelope $(A,\varphi)\in {\mathcal E}_N(L,B) $
is a homomorphic image of $U_N(L,B)$. Therefore, it is interesting to
explore the cases when $U_N(L,B)$ is
a simple conformal algebra.

This case seems to be interesting by one more reason. For a fixed
set of generators $B$ one may consider the lattice $\mathcal{UE}
(L,B)$ of universal envelopes $U_N(L,B)$ as a subset of $\mathcal
E(L)$. Assume $L^2=L$ (e.g., $L$ is simple). Then the lowest point
of the lattice $\mathcal{UE}(L,B)$ is $\{0\}$. The set of minimal
(nonzero) points of this lattice necessarily includes all simple
universal envelopes. If we extend the set of generators, i.e.,
consider $B'\supset B$, then $\mathcal{UE}(L,B)\subseteq
\mathcal{UE}(L,B')$. Some of minimal points of the lattice
$\mathcal{UE}(L,B)$ may not be minimal in $\mathcal{UE}(L,B')$.
But simple universal envelopes of $\mathcal{UE}(L,B)$ are always
minimal in $\mathcal {UE}(L,B')$.

Let $L$ be a finitely generated conformal Lie superalgebra.
Any simple associative envelope of $L$ of at most linear growth defines
an irreducible finite representation of $L$. Indeed, it was shown in
\citep{Re1,Ko3} that every simple finitely generated associative conformal algebra
of at most linear growth can be embedded into $\Cend V$, $\rank V<\infty $,
as an irreducible subalgebra.

Conversely, let $\rho $ be a representation of a conformal superalgebra $L$
on a finite module $V$. Denote by $A_V(L)$ the associative conformal subalgebra
generated by $\rho(L)$ in $\Cend V$.
 The pair $(A_V(L), \rho)$ is an associative
envelope of $L$; two representations are equivalent if and only if the
corresponding envelopes are isomorphic.
If $\rho $ is irreducible then the associative conformal algebra $A_V(L)$
generated in $\Cend V$
by $\rho (L)$ acts irreducibly on~$V$.
Irreducible subalgebras of $\Cend V$ were completely described in \citep{Ko4}.
In particular, $A_V(L)$ is a simple conformal algebra of at most linear growth.

A finite representation $\rho $ induces a locality function $N$ on $L$, therefore,
 on a set
of generators $B\subset L$. Namely,
$N(x,y) = N_{A_V(L)} (\rho(x),\rho(y))$, $x,y\in B$.
The envelope $A_V(L)$ is a homomorphic image of the corresponding  universal envelope
$U_N(L,B)$.
Therefore, those finite irreducible representations that appear from
$A_V(L)\simeq U_N(L,B)$ are in some sense simplest ones.

\begin{defn}
Let $L$ be a conformal Lie superalgebra generated by a subset $B$.
An irreducible finite representation
$\rho : L \to \Cend V$ is called {\em universally defined\/}
if $A= A_V(L)\simeq U_N(L,B)$, where $N:B\times B\to {\Zset}_+$
is the locality function induced   by $\rho$, i.e.,
$N(a,b):=N_A(\rho(a),\rho(b))$.
\end{defn}

Note that this property depends on the choice of generating set $B$. However,
if a representation $\rho $ is universally defined with respect to $B$ then
so is $\rho $ with respect to any $B'\supseteq B$.

\begin{exmp}
Consider $L=\Curr \sll_2$ (current conformal algebra over $\sll_2$), and let $B=\{1\otimes e,1\otimes f,1\otimes h\}$,
where $e,f,h$ is the standard basis of $\sll_2$.
There are no universally defined representations of $L$
with respect to~$B$.
\end{exmp}

\begin{exmp}\label{exmp-Vir}
For $L=\Virr=\Bbbk[D]v$ (Virasoro conformal algebra), $B=\{v\}$,
a universally defined representation exists and unique. Namely,
if $N(v,v)=2$ then $U_N(L,B)\simeq \Cend_{1,v}$ as it was actually shown in
\citep{BFK1}.
\end{exmp}

\section{Universally defined representations of $W_n$}

In this section we describe universally defined representations
of ${\Zset}_2$-graded extensions $W_n$, $n\ge 0$, of the Virasoro conformal algebra.
The explicit construction of these conformal Lie superalgebras via formal power series
is stated, for example, in \citep{FK}. It is easy to show that one may present $W_n$
by generators and defining relations as follows.

\begin{prop}[Kolesnikov, 2004]\label{prop-defining}
Conformal superalgebra $W_n$ is generated
by the set
$B=\{v, \xi _i, \partial _i \mid i =1,\dots ,n\}$
with defining relations
\begin{equation}\label{eqn-def.rel}
\begin{array}{c}
\xi _i \so{0} \xi _j = - \xi _j \so{0} \xi _i, \quad
\partial _i \so{n} \partial _j = 0,\ n\ge 0, \\
\partial _j\so{0} \xi _i = \delta _{ij}v,
\quad
v \so{0} \xi _i = \xi _i \so{0} v = D\xi _i, \quad
\xi _i \so{1} v = 2\xi _i, \quad \partial _j \so{0} v = 0,  \\
\partial _j \so{1} v = \partial _j,\quad
\xi _i \so{0} \xi _j = \frac{1}{2} D(\xi _i\so{1} \xi_j), \quad
v\so{0} v = Dv, \quad  v\so{1} v = 2v, \\
\xi_i\so{n}\xi_j=
 v\so{n}\partial_j=\xi_i\so{n}\partial_j = v\so{n}\xi_i=v\so{n}v =0, \ n\ge 2, \\
\end{array}
\end{equation}
(here $p(v)=0$, $p(\xi _i)=p(\partial _i)=1$).
\end{prop}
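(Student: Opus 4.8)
The plan is to work in a concrete model of $W_n$, verify the listed identities and the generation statement there, and then prove that (\ref{eqn-def.rel}) is a \emph{complete} set of defining relations by a normal-form argument together with a rank count. Recall from \citep{FK} the realization of $W_n$ by formal distributions coming from the Lie superalgebra of derivations of $\Bbbk[t,t^{-1}]\otimes\Lambda(n)$, where $\Lambda(n)$ is the Grassmann superalgebra on $n$ odd generators $\xi_1,\dots,\xi_n$ and $D$ acts as $-\tfrac{d}{dt}$. I would identify $v$ with the Virasoro field (so that $\Virr=\Bbbk[D]v\subseteq W_n$), and $\xi_i$, $\partial_i$ with the suitable odd fields built from $\xi_i\tfrac{d}{dt}$ and $\tfrac{\partial}{\partial\xi_i}$ respectively. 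Since the $\so{n}$-products of fields are computed by the residue formula of Section~2, each identity in (\ref{eqn-def.rel}) reduces to an elementary computation with derivations of $\Lambda(n)$; in particular the parities come out as stated and all products among the generators vanish for $n\ge 2$ except the ones displayed. The same computation shows $v,\xi_i,\partial_i$ generate $W_n$ as a conformal algebra: iterated $\so{1}$-products of the $\xi_i$ produce the higher Grassmann components of the ``$\tfrac{d}{dt}$-part'' of a vector field, and bracketing these with the $\partial_j$ produces the ``$\tfrac{\partial}{\partial\xi_j}$-parts''.

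Next, let $\widetilde W_n=F_N(B)/I$ be the Lie conformal superalgebra presented by $B$ with locality bound $N\equiv 2$ and the relations obtained from (\ref{eqn-def.rel}) after rewriting $(\cdot\so{n}\cdot)$ via (\ref{commutator}). The previous paragraph gives a surjective homomorphism $\pi\colon\widetilde W_n\twoheadrightarrow W_n$ fixing $B$, so it remains to bound $\widetilde W_n$ from above. Using the relations together with the axioms (\ref{anticomm}) and (\ref{Jacobi}), I would reduce an arbitrary iterated $\so{n}$-product of generators, modulo $\Bbbk[D]$ and scalars, to one of: $v$; a chain $\xi_{i_1}\so{1}(\xi_{i_2}\so{1}(\cdots\so{1}\xi_{i_k}))$; a $\partial_j$; or a chain $\xi_{i_1}\so{1}(\cdots\so{1}(\xi_{i_m}\so{1}\partial_j))$. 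Indeed, $\partial_i\so{n}(\cdot)$ is eliminated using $\partial_j\so{0}\xi_i=\delta_{ij}v$, $\partial_j\so{0}v=0$, $\partial_j\so{1}v=\partial_j$ and the vanishing for $n\ge 2$; $v\so{n}(\cdot)$ reproduces a $D$-derivative or a scalar multiple; and $\xi_i\so{0}\xi_j=\tfrac12 D(\xi_i\so{1}\xi_j)$ removes all $\so{0}$-products among the $\xi_i$. One then checks, using the Jacobi identity (\ref{Jacobi}), that each such chain is totally antisymmetric in its $\xi$-indices, hence vanishes as soon as two indices coincide; so there are at most $2^n$ chains of the first kind (counting $v$) and at most $n\cdot 2^n$ of the second.

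Consequently $\widetilde W_n$ is spanned over $\Bbbk[D]$ by at most $(n+1)2^n$ elements. On the other hand, by its description in \citep{FK} the conformal algebra $W_n$ is a free $\Bbbk[D]$-module of rank $(n+1)2^n$. Since $\Bbbk[D]$ is a principal ideal domain, any surjection of finitely generated $\Bbbk[D]$-modules from a module generated by at most $r$ elements onto a free module of rank $r$ is an isomorphism (the surjection splits, and the presence of a free rank-$r$ summand already forces $r$ generators). Applying this to $\pi$ gives $\widetilde W_n\cong W_n$ as $\Bbbk[D]$-modules, hence — $\pi$ being a homomorphism of conformal algebras — as Lie conformal superalgebras, which is the assertion.

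The main obstacle, and the only real content, is the upper bound on $\rank_{\Bbbk[D]}\widetilde W_n$: one must organize the reduction so that it provably terminates and produces no more than $(n+1)2^n$ irreducible monomials. Termination is taken care of by the monomial order (\ref{ord}) on normal words of Section~3, since each relation strictly lowers the principal word; the count then rests on deriving from (\ref{eqn-def.rel}) and (\ref{Jacobi}) that the $\xi$-chains are totally antisymmetric and that no further types of irreducible monomials arise — a bounded, essentially mechanical computation because the locality on $B$ is $\le 2$. An alternative, perhaps cleaner, route is to pass to the coefficient algebra $\Coeff W_n$, which as a linear space equals $\Bbbk[t,t^{-1}]\otimes_{\Bbbk[D]}W_n$ and is an ordinary loop-type Lie superalgebra of vector fields: present it from (\ref{eqn-def.rel}) via the formula $a(n)b(m)=\sum_{s\ge 0}\binom{n}{s}(a\so{n-s}b)(m+s)$, identify that presentation by the classical PBW/Shirshov technique for Lie superalgebras, and translate the result back to $W_n$.
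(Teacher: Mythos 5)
The paper itself gives no proof of Proposition~\ref{prop-defining}: it is attributed to the 2004 reference [Ko2] with the remark that it ``is easy to show'', so there is nothing in the text to compare with line by line; in the cited source the completeness of the relations emerges from an explicit Gr\"obner--Shirshov basis computation. Your route is different but sound, and the part of it that is genuinely clever is the endgame: once you know that the abstractly presented algebra $\widetilde W_n$ is spanned over $\Bbbk[D]$ by at most $(n+1)2^n$ elements and that it surjects onto $W_n$, which is a free $\Bbbk[D]$-module of exactly that rank (this is visible in the paper from the injective map at the start of Section~6, whose source has dimension $2^n+n2^n$), the surjection splits and the PID structure theorem forces it to be injective. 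This replaces the linear-independence half of a Composition--Diamond argument by an abstract module-theoretic fact, which is a real economy. Two points must be made explicit for the spanning bound to be airtight. First, several products of generators do not occur in (\ref{eqn-def.rel}) at all --- e.g.\ $\partial_j\so{1}\xi_i$, $\xi_i\so{0}\partial_j$, $v\so{1}\xi_i$ --- and are pinned down only through quasi-symmetry (\ref{anticomm}) and the locality bound; one checks, for instance, that $\partial_j\so{1}\xi_i=\{\xi_i\so{1}\partial_j\}=-\xi_i\so{1}\partial_j$ and $\xi_i\so{0}\partial_j=\delta_{ij}v+D(\xi_i\so{1}\partial_j)$, so these do land in the span of your chains. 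Second, the total antisymmetry of the $\so{1}$-chains is not just the innermost anticommutativity: for a non-innermost adjacent transposition the Jacobi identity (\ref{Jacobi}) gives
\[
\xi_a\so{1}(\xi_b\so{1}X)+\xi_b\so{1}(\xi_a\so{1}X)
=(\xi_a\so{1}\xi_b)\so{1}X+(\xi_a\so{0}\xi_b)\so{2}X,
\]
and it is precisely the relation $\xi_a\so{0}\xi_b=\frac{1}{2}D(\xi_a\so{1}\xi_b)$ combined with (C2) that makes the right-hand side vanish; the same mechanism is what lets you re-expand the left-normed correction terms $(\xi_a\so{1}\xi_b)\so{m}X$ for $m\ne 1$ and keep the reduction terminating. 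With these two finite computations written out, your count of $2^n+n\cdot 2^n$ irreducible chains closes and the argument is complete.
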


Let $A_n$ be the (ordinary) associative algebra with a unit generated by
the set $\{\xi_i,\partial_i\mid i=1,\dots,n\}$ with the following relations:
\begin{eqnarray}\nonumber
& \xi_i\xi_j + \xi_j\xi_i=0, \quad \partial_i\partial_j + \partial_j\partial_i = 0, \\
& \partial_i\xi_j + \xi_j\partial_i =\delta_{ij}. \nonumber
\end{eqnarray}
We may consider  $\Bbbk[D]\otimes A_n[v]$
as an associative conformal algebra with operations
\[
(1\otimes a(v)) \oo{n}(1\otimes b(v))= 1\otimes a(v)\frac{\partial^n b(v)}{\partial v^n},
\]
$a(v), b(v)\in A_n[v]$.
Since $A_n \simeq M_{2^n}(\Bbbk)$,
the associative conformal algebra obtained is isomorphic
to $\Cend_{2^n}$ \citep{BKL1,K2,Re1}.
This algebra is ${\Zset}_2$-graded with respect
to the usual grading on $A_n[v]$
($p(v)=0$, $p(\xi_i) = p(\partial _i) =1$).

\begin{prop}\label{prop-embedding}
The following maps define homomorphisms of conformal Lie superalgebras
$W_n \to (\Bbbk[D]\otimes A_n[v])^{(-)}$:
\begin{eqnarray}
&& \varphi_1: v \mapsto v-D,\ \xi_i \mapsto (v-D)\xi_i,\ \partial_i\mapsto \partial_i;
   \label{eqn-1emb} \\
&& \varphi_2: v \mapsto v,\ \xi_i \mapsto v\xi_i,\ \partial_i\mapsto \partial_i.
   \label{eqn-2emb}
\end{eqnarray}
\end{prop}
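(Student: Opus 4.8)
The claim is that the two $\Bbbk[D]$-linear, parity-preserving maps $\varphi_1,\varphi_2$ defined on the generating set $B=\{v,\xi_i,\partial_i\}$ extend to homomorphisms of Lie conformal superalgebras into $(\Bbbk[D]\otimes A_n[v])^{(-)}$. The target is a Lie conformal superalgebra by Proposition~\ref{prop adjoint}, since $\Bbbk[D]\otimes A_n[v]\cong\Cend_{2^n}$ is an associative $\Zset_2$-graded conformal algebra. By Proposition~\ref{prop-defining}, $W_n$ is the Lie conformal superalgebra with generators $B$ and defining relations (\ref{eqn-def.rel}). Hence the plan is the standard one for a presented algebra: extend each $\varphi_k$ to the free Lie conformal superalgebra on $B$ with locality bounded by the function $N$ read off from the right-hand sides of (\ref{eqn-def.rel}) (the bounds $a\so{m}b=0$, $m\ge 2$, give $N\le 2$ on all pairs), and then check that the images $\varphi_k(v),\varphi_k(\xi_i),\varphi_k(\partial_i)$ satisfy every relation of (\ref{eqn-def.rel}) when the bracket $(\cdot\so{m}\cdot)$ on the target is interpreted as the commutator (\ref{commutator}). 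Confirming the vanishing relations for $m\ge 2$ simultaneously verifies the required locality bounds, so that the extension descends to~$W_n$.

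All the needed computations reduce to the rule $(1\otimes a(v))\oo{m}(1\otimes b(v))=1\otimes a(v)\,\partial_v^m b(v)$ together with (C2), (C3), which give $(Da)\oo{m}b=-m\,a\oo{m-1}b$ and $a\oo{m}(Db)=D(a\oo{m}b)+m\,a\oo{m-1}b$ and thereby determine all products on $\Bbbk[D]\otimes A_n[v]$; one also needs the quasi-antipode $\{b\oo{m}a\}=\sum_{s\ge 0}\frac{(-1)^{m+s}}{s!}D^s(b\oo{m+s}a)$, whose sum always truncates on these polynomial arguments. I would first record the short list of auxiliary products (e.g.\ $v\oo{1}v=v$, $v\oo{0}v=v^2$, $v\oo{1}\partial_j=0$, $v\oo{1}(v\xi_i)=v\xi_i$, $(v-D)\oo{1}(v-D)=v-D$, $(D\xi_i)\oo{1}(v-D)=-(v-D)\xi_i$, and so on), and then run through (\ref{eqn-def.rel}) grouped by type: (i) $\varphi_k(v)$ is a Virasoro element of locality $2$, i.e.\ $v\so{0}v=Dv$, $v\so{1}v=2v$, $v\so{m}v=0$ for $m\ge 2$; (ii) the $v$–$\xi$ relations $v\so{0}\xi_i=\xi_i\so{0}v=D\xi_i$, $\xi_i\so{1}v=2\xi_i$, $v\so{m}\xi_i=0$ for $m\ge 2$; (iii) the $v$–$\partial$ relations $\partial_j\so{0}v=0$, $\partial_j\so{1}v=\partial_j$, $v\so{m}\partial_j=0$ for $m\ge 2$; (iv) $\partial_i\so{m}\partial_j=0$, which after taking the commutator reduces to $\partial_i\partial_j+\partial_j\partial_i=0$ in $A_n$; (v) the $\xi$–$\xi$ relations, where both $\xi_i\so{0}\xi_j+\xi_j\so{0}\xi_i=0$ and $\xi_i\so{0}\xi_j=\frac12 D(\xi_i\so{1}\xi_j)$ come down to $\xi_i\xi_j+\xi_j\xi_i=0$; and (vi) the mixed relation $\partial_j\so{0}\xi_i=\delta_{ij}v$ together with $\xi_i\so{m}\partial_j=0$ for $m\ge 2$, where the Clifford identity $\partial_i\xi_j+\xi_j\partial_i=\delta_{ij}$ of $A_n$ enters directly.

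There is no genuinely hard step here; the only obstacle is bookkeeping. One has to keep the signs straight — the $\xi_i$ and $\partial_i$ are odd, so both the commutator (\ref{commutator}) and the quasi-antipode carry the factors $(-1)^{p(a)p(b)}$ and $(-1)^{m+s}$ — and one has to push the extra $D$-summands through (C2), (C3) whenever an argument involves $v-D$ or $D\xi_i$; this last point is the only real difference between the $\varphi_2$ and the $\varphi_1$ computations. I expect case (vi) and the verification that $\varphi_1(v)=v-D$ really satisfies $v\so{0}v=Dv$, $v\so{1}v=2v$, $v\so{m}v=0$ ($m\ge 2$) to be where one must be most attentive, the rest being one-line computations. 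Once all relations of (\ref{eqn-def.rel}) are confirmed for both $\varphi_1$ and $\varphi_2$, each map factors through the presentation of $W_n$ given by Proposition~\ref{prop-defining}, yielding the two asserted homomorphisms.
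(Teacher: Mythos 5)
Your proposal is correct and follows exactly the paper's own route: the paper's proof consists of the single observation that, by the presentation of Proposition~\ref{prop-defining}, it suffices to verify that the relations (\ref{eqn-def.rel}) hold for the images under $\varphi_k$, the computation being straightforward. Your write-up merely fleshes out how that verification (including the locality relations for $m\ge 2$ and the sign bookkeeping) would be organized.
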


\begin{proof}
It is enough to check that the relations (\ref{eqn-def.rel}) hold under $\varphi_k$,
$k=1,2$. The computation is straightforward.
\end{proof}

Denote by $C_k$ ($k=1,2$) the associative conformal subalgebra of $\Bbbk[D]\otimes A_n[v]$
generated by $\varphi_k(W_n)$. To write down the explicit form of these algebras,
let us fix an isomorphism
$\Bbbk[D]\otimes A_n[v]\to \Cend_{2^n}$ as follows.
If we identify the Grassman algebra $\wedge_n(\xi_1,\dots,\xi_n)$ with the
$2^n$-dimensional vector space over $\Bbbk $ then $A_n$ turns into the full
algebra of linear transformations of this space.
Let us fix a linear basis $(e_1,\dots,e_{2^n})$
of $\wedge_n$ in such a way that $e_1=1$, $e_{2^n}= \xi_1\dots\xi_n$
and identify $A_n\simeq \End \wedge_n$ with $M_{2^n}(\Bbbk )$.

Then
$C_1$ maps onto $\Cend_{2^n,Q}\simeq M_{2^n}(\Bbbk[D,v])Q(v-D)$, where
$Q(v)=\diag(v,1,\dots, 1)$. Analogously,
conformal algebra
$C_2$ can be identified
with $\Cend_{\widetilde{Q},2^n}\simeq \widetilde{Q}(v)M_{2^n}(\Bbbk[D,v])$,
where
$\widetilde Q = \diag(1,\dots ,1,v)$.

Let us compute the locality functions
$N_k:B\times B \to {\Zset}_+$, $k=1,2$,
where
$N_k(x,y)=N_{C_k}(\varphi_k(x), \varphi_k(y))$,
$x,y\in B$.

\medskip
\centerline{\hfil $N_1(x,y)$\hfil\hfil\hfil$N_2(x,y)$\hfil}
\nobreak
\centerline{\def\tabrule{\multispan{17}\hrulefill\cr}
\def\tabvskip#1{height #1pt& && && && && && && && &\cr}
\hfil\vbox{\offinterlineskip\tabskip=0pt plus 1fill
\halign{\vrule#&\ \hfil #\hfil\ &\vrule #&\ \hfil #\hfil\ &\vrule #&
\ \hfil #\hfil\ &#&\ \hfil #\hfil\ &#&
\ \hfil #\hfil\ &\vrule #&\ \hfil #\hfil\ &#&
\ \hfil #\hfil\ &#&\ \hfil #\hfil\ &\vrule #\cr
\tabrule
height 4pt& && &\omit& && && &\omit& && && &\cr
& \smash{\lower 10pt\hbox{$x$}}&& &\omit & && && $y$&\omit & && && &\cr
height 4pt& && &\omit& && && &\omit& && && &\cr
& &&\multispan{14}\hrulefill\cr
\tabvskip4
& && $v$&& $\xi_1$&& $\dots$&& $\xi_n$&& $\partial_1$&& $\dots$&& $\partial_n$&\cr
\tabvskip4
\tabrule
\tabvskip4
& $v$&& 2&& 2&&\dots && 2&& 2&& \dots && 2&\cr
\tabvskip4
\tabrule
\tabvskip4
& $\xi_1$&& 2&& 0&&\dots && 2&& 2&& $\dots$&& 2&\cr
& $\vdots $&& $\vdots$&& $\vdots$&& $\ddots$&& $\vdots$&& $\vdots$&& $\ddots$&& $\vdots$&\cr
\tabvskip4
& $\xi_n$&& 2&& 2&&\dots && 0&& 2&& $\dots$&& 2&\cr
\tabvskip4
\tabrule
\tabvskip4
& $\partial_1$&& 1&& 1&&\dots && 1&& 0&& $\dots$&&  1&\cr
& $\vdots $&& $\vdots$&& $\vdots$&& $\ddots$&& $\vdots$&& $\vdots$&& $\ddots$&& $\vdots$&\cr
\tabvskip4
& $\partial_n$&& 1&& 1&&\dots && 1&& 1&& $\dots$&& 0&\cr
\tabvskip4
\tabrule
}}\hfil\hfil
\vbox{\offinterlineskip\tabskip=0pt plus 1fill
\halign{\vrule#&\ \hfil #\hfil\ &\vrule #&\ \hfil #\hfil\ &\vrule #&
\ \hfil #\hfil\ &#&\ \hfil #\hfil\ &#&
\ \hfil #\hfil\ &\vrule #&\ \hfil #\hfil\ &#&
\ \hfil #\hfil\ &#&\ \hfil #\hfil\ &\vrule #\cr
\tabrule
height 4pt& && &\omit& && && &\omit& && && &\cr
& \smash{\lower 10pt\hbox{$x$}}&& &\omit & && && $y$&\omit & && && &\cr
height 4pt& && &\omit& && && &\omit& && && &\cr
& &&\multispan{14}\hrulefill\cr
\tabvskip4
& && $v$&& $\xi_1$&& $\dots$&& $\xi_n$&& $\partial_1$&& $\dots$&& $\partial_n$&\cr
\tabvskip4
\tabrule
\tabvskip4
& $v$&& 2&& 2&&\dots && 2&& 1&& \dots && 1&\cr
\tabvskip4
\tabrule
\tabvskip4
& $\xi_1$&& 2&& 0&&\dots && 2&& 1&& $\dots$&& 1&\cr
& $\vdots $&& $\vdots$&& $\vdots$&& $\ddots$&& $\vdots$&& $\vdots$&& $\ddots$&& $\vdots$&\cr
\tabvskip4
& $\xi_n$&& 2&& 2&&\dots && 0&& 1&& $\dots$&& 1&\cr
\tabvskip4
\tabrule
\tabvskip4
& $\partial_1$&& 2&& 2&&\dots && 2&& 0&& $\dots$&&  1&\cr
& $\vdots $&& $\vdots$&& $\vdots$&& $\ddots$&& $\vdots$&& $\vdots$&& $\ddots$&& $\vdots$&\cr
\tabvskip4
& $\partial_n$&& 2&& 2&&\dots && 2&& 1&& $\dots$&& 0&\cr
\tabvskip4
\tabrule
}}\hfil}
\medskip

Although conformal algebras $C_1$ and $C_2$ are isomorphic,
the associative envelopes
$(C_1,\varphi_1)$ and $(C_2,\varphi_2)$ are not isomorphic for $n>0$
(hence,  the corresponding representations are not equivalent).
The reason is that $N_1(v,\partial_i) = 2\neq N_2(v,\partial_i)=1$.
For $n=0$ these envelopes are isomorphic: they correspond to the universally
defined representation of the Virasoro conformal algebra from Example~\ref{exmp-Vir}.

\begin{thm}\label{thm-univ-W}
For $n>0$ there exist exactly two universally defined representations
of $W_n$ with respect to $B=\{v,\xi_i,\partial_i\mid i=1,\dots, n\}$.
Namely, these representations correspond to the associative envelopes
$(C_1,\varphi _1)$ and $(C_2,\varphi _2)$.
\end{thm}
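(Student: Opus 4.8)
The plan is to prove two things: first, that both representations $(C_k,\varphi_k)$ are universally defined, i.e.\ that $C_k \simeq U_{N_k}(W_n,B)$; second, that any universally defined representation of $W_n$ with respect to $B$ must be equivalent to one of these two. For the first part I would argue that $C_k$ is a homomorphic image of $U_{N_k}(W_n,B)$ (this is automatic, since $(C_k,\varphi_k)\in\mathcal E_{N_k}(W_n,B)$), and then show the homomorphism $U_{N_k}(W_n,B)\to C_k$ is injective by a dimension/growth count. Concretely, using the presentation $U_{N}(W_n,B)\simeq F_{N}(B)/I_{N}(B)$ from Section~4, I would exhibit a Gr\"obner--Shirshov basis $S$ for the defining ideal: start from the relations obtained by rewriting \eqref{eqn-def.rel} via \eqref{commutator}, compute all compositions (there are finitely many generator pairs, and the locality bounds $N_k(x,y)\le 2$ keep the word lengths small), and close the set under composition. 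By the Composition--Diamond lemma (Theorem~\ref{CDlemma}) the $S$-reduced normal words then form a basis of $U_{N_k}(W_n,B)$, and I would match this basis against the known basis of $C_k\subseteq\Bbbk[D]\otimes A_n[v]$ — which, via the identification $C_1\simeq \Cend_{2^n,Q}$, $C_2\simeq \Cend_{\widetilde Q,2^n}$, is explicitly $M_{2^n}(\Bbbk[D,v])Q(v-D)$ resp.\ $\widetilde Q(v)M_{2^n}(\Bbbk[D,v])$. Equality of the two bases (equivalently, of Hilbert series as $\Bbbk[D]$-modules) forces the surjection to be an isomorphism.

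For the second part I would start from an arbitrary universally defined $\rho\colon W_n\to\Cend V$ with induced locality $N=N_{A_V(W_n)}(\rho(\cdot),\rho(\cdot))$ on $B$, so $A:=A_V(W_n)\simeq U_N(W_n,B)$. The relation $v\so1 v=2v$ together with $v\so0 v=Dv$ forces, on the Virasoro subalgebra $\Bbbk[D]v$, that $N(v,v)=2$ and that $\rho|_{\Virr}$ is the unique universally defined Virasoro representation of Example~\ref{exmp-Vir}; hence after identifying we may assume $\rho(v)=v-D$ or $v$ acts on $V=\Bbbk[D]\otimes\Bbbk^{2^n}$ — more precisely, $A$ contains a copy of $\Cend_{1,v}$ and, being simple of at most linear growth with $\rho(v)$ as above, embeds into $\Cend_M$ for a finite module $M$. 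Next I would use the mixed relations $\partial_j\so0\xi_i=\delta_{ij}v$, $\xi_i\so1 v=2\xi_i$, $\partial_j\so1 v=\partial_j$, $v\so0\xi_i=D\xi_i$, $\partial_j\so0 v=0$ to pin down $\rho(\xi_i)$ and $\rho(\partial_i)$ up to the normalization choices: the commutation relations force $\rho(\partial_i)$ to be ``$v$-free'' (locality $1$ with $v$ on one side) and $\rho(\xi_i)$ to be linear in $v$, and the two admissible sign/shift normalizations are exactly $\varphi_1$ and $\varphi_2$. The key structural input here is the classification of irreducible (equivalently, simple of at most linear growth) subalgebras of $\Cend V$ cited in the excerpt (\citep{Ko3,Ko4,Re1}), which identifies $A$ up to the choices of polynomial degree shifts.

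The main obstacle I expect is the second part: showing that no locality function $N$ other than $N_1$ or $N_2$ can yield a \emph{simple} universal envelope. One must rule out, for instance, $N(v,\partial_i)=0$ or values $\ge 3$, and also rule out ``intermediate'' possibilities where some $\partial_i$ behaves like a $\varphi_1$-image and others like $\varphi_2$-images. I would handle this by a case analysis driven by the defining relations: each choice of $N(x,y)$ on the finitely many generator pairs either contradicts an axiom (e.g.\ via the consequences of \eqref{assoc-l}–\eqref{assoc-r} applied to the images of \eqref{eqn-def.rel}), or forces the envelope to acquire a proper nonzero ideal (so it is not simple, hence not universally defined), or reproduces one of $N_1,N_2$. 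The Lemma~\ref{lem lifting} symmetry criterion will help: $N_1$ is symmetric, $N_2$ is not, and the superinvolution $x\mapsto -x$ lifts exactly in the symmetric case, which explains why these are genuinely the only two and why they are inequivalent. Once the locality is pinned to $N_1$ or $N_2$, simplicity of $U_{N_k}(W_n,B)$ from the first part closes the argument.
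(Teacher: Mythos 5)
Your plan for the first half (showing $C_k\simeq U_{N_k}(W_n,B)$ via a Gr\"obner--Shirshov basis, the Composition--Diamond lemma, and a comparison of the $S$-reduced words with an explicit basis of $C_k\subseteq\Bbbk[D]\otimes A_n[v]$) is exactly the paper's route: the author quotes the GSB of $U_{N_1}(W_n,B)$ from earlier work and computes one for $U_{N_2}(W_n,B)$ by completing the rewritten relations \eqref{eqn-def.rel} under compositions, then checks that the images of the reduced words are linearly independent in $\Bbbk[D]\otimes A_n[v]$. No objection there beyond the expected omission of the actual computation.

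The genuine gap is in the uniqueness half. First, you miss the reduction that makes the problem tractable: if the induced locality $N$ satisfies $N_k(x,y)\le N(x,y)$ for all $x,y\in B$ and some $k\in\{1,2\}$, then $(C_k,\varphi_k)\in\mathcal E_N(W_n,B)$, so $C_k$ is a homomorphic image of $U_N(W_n,B)\simeq A_V(W_n)$, and simplicity of the latter forces the envelopes to be isomorphic. Hence there is nothing to rule out for ``values $\ge 3$''; the only situation to exclude is a locality function that is \emph{strictly deficient relative to both} $N_1$ \emph{and} $N_2$ at some (possibly different) generator pairs. Second, and more seriously, excluding those deficient localities is the real mathematical content of uniqueness, and your proposal only gestures at it (``I would handle this by a case analysis''). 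The paper carries this out entirely inside $U_N(W_n,B)$ from the defining relations: for instance, $N(v,\xi_i)<2$ yields $\xi\oo{m}\xi=\frac{1}{m+1}\,v\oo{0}(\xi\oo{m+1}\xi)$, hence $N(\xi,\xi)=0$, hence $D\xi_i=0$ and the envelope collapses to $0$; the cases $N(v,\partial_i)<1$, $N(\partial_i,\partial_j)<1$, and the mixed case $N(\xi_i,\partial_j)<2$, $N(\partial_l,\xi_k)<2$ are reduced to one another by similar manipulations. Your substitute idea---invoking the classification of irreducible subalgebras of $\Cend V$ to ``pin down'' $\rho(\xi_i)$ and $\rho(\partial_i)$ up to normalization---does not obviously work: that classification constrains the target algebra $A_V(W_n)$, not the embedding of $W_n$ into it, and it says nothing about whether $U_N(W_n,B)\to A_V(W_n)$ is injective for a given $N$. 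Finally, Lemma~\ref{lem lifting} plays no role here (it is used for $K_n$); the inequivalence of $(C_1,\varphi_1)$ and $(C_2,\varphi_2)$ follows already from $N_1(v,\partial_i)=2\ne 1=N_2(v,\partial_i)$.
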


\begin{proof}
In \citep{Ko2}, the Gr\"obner--Shirshov basis $S_1$ of $U_{N_1}(W_n,B)$ was found.
The set of $S_1$-reduced conformal words consists of
\begin{eqnarray}
& D^t ((v\oo{0})^n \xi _{i_1} \oo{0}
\dots  \oo{0} \xi _{i_s} \oo{0} \partial _{j_1}\oo{0}
\dots \oo{0} \partial _{j_q}),
\quad     n>0,\ s,q\ge 0,
                                                      \label{red-1-1}\\
& D^t (\xi _{i_1} \oo{0}\dots  \oo{0}\xi _{i_{r+1}}\oo{1} \dots \oo{1} \xi _{i_s}
\oo{1} \partial _{j_1}\oo{0} \dots \oo{0}
\partial _{j_q}),
\quad  1\le r\le s,\ q\ge 0,
                                                      \label{red-1-2}\\
& D^t (\xi _{i_1}\oo{1}\dots \oo{1}\xi _{i_s}\oo{1}\partial _{j_1}\oo{0} \dots \oo{0} \partial _{j_q}),
\quad   s\ge 0, \ q>0,
                                                      \label{red-1-3}
\end{eqnarray}
where
$1\le i_1<\dots <i_s\le n$, $1\le j_1<\dots <j_q\le n$, $t\ge 0$;
by default,  we assume the bracketing is right-justified.
Here we use the following order on $B$:
$v<\xi_1<\dots<\xi_n<\partial_1<\dots <\partial_n$.

The map $\varphi _1: W_n \to C_1$ can be extended to a
homomorphism $U_{N_1}(W_n,B) \to C_1$ because of the choice
of $N_1$. Let us also denote this homomorphism by $\varphi _1$.
It is easy to see that the images of
(\ref{red-1-1})--(\ref{red-1-3}) are linearly independent in $C_1$ \citep{Ko2},
so $(C_1,\varphi _1)$ is the universal associative
envelope corresponding to the locality function $N_1$ on~$B$.
Therefore, $\varphi _1$ is a universally defined representation
with respect to~$B$.

Now, let us show that $C_2\simeq U_{N_2}(W_n,B)$.
The initial set of defining relations of $U_{N_2}(W_n,B)$
appears from (\ref{eqn-def.rel}):
\begin{eqnarray}
& \partial _i\oo{0} \xi _j + \xi _j\oo{0} \partial _i = \delta _{ij}v,
                                      \label{eq-2-1}\\
& 2(\xi _i\oo{0} \xi _j + \xi _j\oo{0}\xi _i)
 = D(\xi _i\oo{1}\xi _j + \xi _j\oo{1}\xi _i), \quad i\ne j,
                                      \label{eq-2-2}\\
& \partial _i\oo{0}\partial _j + \partial _j\oo{0} \partial _i = 0,
i\ne j,
                                      \label{eq-2-3}\\
& v\oo{0}\xi _i - \xi _i\oo{0} v + D(\xi _i \oo{1} v) = D\xi _i,
                                      \label{eq-2-4}\\
& \xi _i\oo{0} v - v\oo{0} \xi _i + D(v\oo{1} \xi _i) = D\xi _i,
                                      \label{eq-2-5}\\
& \xi _i\oo{1} v + v\oo{1} \xi _i = 2\xi _i,
                                      \label{eq-2-6}\\
& \partial _i \oo{0} v - v\oo{0} \partial _i = 0,
                                      \label{eq-2-7}\\
& \partial _i\oo{1} v = \partial _i,
\quad  v\oo{1} v = v.
                                      \label{eq-2-8}
\end{eqnarray}

\begin{lem}
The following relations hold on $U_{N_2}(W_n,B)$:
\begin{eqnarray}
& \xi _i\oo{0} \xi _j = -\xi _j\oo{0} \xi _i,
\quad \xi _i\oo{1}\xi _j = -\xi _j\oo{1} \xi _i,
                                      \label{eq-A-1}\\
& v\oo{0}\xi _i = \xi _i\oo{0} v,
\quad  \xi _i\oo{1} v= \xi _i,\quad  v\oo{1}\xi _i = \xi _i,
                                      \label{eq-A-2}\\
& \xi _i\oo{1} (\xi _j\oo{0} \xi _k)= 2\xi _i\oo{0} (\xi _j\oo{1}\xi _k),
\quad i<j<k,
                                      \label{eq-A-3}\\
& \xi _i \oo{1} (\xi _j\oo{0} v) = 2\xi _i\oo{0} \xi _j,
\quad  i<j,
                                      \label{eq-A-4}\\
& \partial _i\oo{1} (\xi _j \oo{0} v) = 2\partial _i\oo{0} \xi _j,
                                      \label{eq-A-5}\\
& \partial _k\oo{1} (\xi _i\oo{0} \xi _j) = 2\partial _k(\xi _i\oo{1} \xi _j),
\quad
i<j.
                                      \label{eq-A-6}
\end{eqnarray}
\end{lem}

\begin{proof}
To deduce the required relations, we are going to perform
the Buchberger--Shirshov algorithm
for conformal algebras \citep{BFK2} starting with relations
(\ref{eq-2-1})--(\ref{eq-2-8}). Define the order of conformal
monomials as in (\ref{ord}) assuming
$v>\xi _n>\dots >\xi _1 > \partial _n>\dots >\partial _1$.

Consider (\ref{eq-2-1}) for $i=j$ and multiply with
$\xi _j\oo{0} $ and $\oo{0}\xi _j$ to obtain
$\xi _j\oo{0} v = \xi _j\oo{0} \partial _j \oo{0} \xi _j = v\oo{0}\xi _j$.

Multiplying (\ref{eq-2-6}) with $\oo{1}v$
and applying (\ref{eq-2-8}) we obtain
$v\oo{1}(\xi _i\oo{1} v) = v\oo{1} \xi _i$.
The same relations allow
to compute the left-hand side:
$v\oo{1} (\xi_i\oo{1} v) = (v\oo{1} \xi _i) \oo{1} v
= -(\xi _1\oo{1} v)\oo{1} v + 2\xi _i\oo{1} v$.
Therefore, $\xi _i\oo{1} v = v\oo{1} \xi _i = \xi _i$,
and (\ref{eq-A-2}) proved.
To get the remaining relations (\ref{eq-A-1}), one can
multiply (\ref{eq-2-2}) with $\oo{1} v$ and $\oo{2} v$.

Relation (\ref{eq-A-4}) appears as the composition of intersection
$(f,g)_w$ \citep{BFK2}, where
$f = v\oo{0} \xi _j - \xi _j\oo{0} v$,
$g= \xi _i\oo{1} v - \xi _i$,
$w=\xi _i\oo{1}v\oo{0} \xi _j$.

To deduce (\ref{eq-A-3}), consider the composition of intersection
$(f,g)_w$, where
$f = \xi _i\oo{1} (\xi _j\oo{0} v) - 2\xi _i\oo{0} \xi _j$,
$g= v\oo{1} \xi _k - \xi _k$,
$w=\xi _i\oo{1} \xi _j\oo{0} v\oo{1}\xi _k$.

Relations (\ref{eq-A-5}) and (\ref{eq-A-6}) can be obtained
in a similar way.
\end{proof}

Let $S_2$ stands for the set of relations
(\ref{eq-2-1}),
(\ref{eq-2-3}),
(\ref{eq-2-6})--(\ref{eq-A-6}).
We do not need
(\ref{eq-2-2}), (\ref{eq-2-4}), (\ref{eq-2-5})
any more since these relations follow from
(\ref{eq-A-1}), (\ref{eq-A-2}).
The set of $S_2$-reduced normal words
 consists of
\begin{equation}                           \label{eq-red-2}
\begin{array}{c}
D^t (\partial _{j_1}\oo{0} \dots  \partial _{j_k}\oo{0}
\xi _{i_1}\oo{0} \dots  \oo{0} \xi _{i_s}\oo{0} (v\oo{0})^n \oo{0} v),
\quad
n\ge 0,\ s,k\ge 0,   \\
D^t(\partial _{j_1}\oo{0} \dots \oo{0} \partial _{j_k}\oo{0}
\xi _{i_1}\oo{0} \dots \oo{0} \xi _{i_r}\oo{1} \dots  \oo{1} \xi _{i_s}),
\quad
1\le r\le s,\ k\ge 0, \\
D^t(\partial _{j_1}\oo{0} \dots \oo{0} \partial _{j_k}
\oo{1}\xi _{i_1}\oo{1} \dots  \oo{1} \xi _{i_s}),
\quad  k> 0,\ s\ge 0,
\end{array}
\end{equation}
where $t\ge 0$,
$1\le j_1<\dots <j_k\le n$,
$1\le i_1<\dots <i_s\le n$.

There exists a homomorphism $U_{N_2}(W_n,B) \to C_2$
 extending $\varphi _2:W_n\to C_2$. Let us denote it also by $\varphi _2$.
It is easy to compute the images of (\ref{eq-red-2}) under
$\varphi _2$: these are
\begin{eqnarray}
& D^t \otimes \partial _{j_1}\dots \partial _{j_k}\xi _{i_1}\dots \xi _{i_s}v^{n+s},
\quad  n>0, \ s,k\ge 0,\nonumber
\\
& D^t \otimes  \partial _{j_1}\dots \partial _{j_k}\xi _{i_1}\dots \xi _{i_s}v^r,
\quad 1\le r\le s,\ k\ge 0,\nonumber
\\
& D^t \otimes  \partial _{j_1}\dots \partial _{j_k} \xi _{i_1}\dots \xi _{i_s},
\quad k>0,\ s\ge 0,\nonumber
\end{eqnarray}
respectively. The images obtained are linearly independent
in $\Bbbk[D]\otimes A_n[v]$, hence, the homomorphism
$\varphi _2: U_{N_2}(W_n,B) \to C_2$ is an isomorphism
of universal envelopes.

We have proved that the associative envelope $(C_2,\varphi _2)$ of
$W_n$ gives rise to a universally defined representation of $W_n$
which is not equivalent (for $n>0$) to the representation coming
from $(C_1,\varphi _1)$. Let us show that there are no other
universally defined representations with respect to the set of
generators $B$ (as well as to any greater set of generators
$B'\supseteq B$).

Assume that an associative envelope $(C,\varphi )$ of $W_n$
corresponds to a universally defined representation with
respect to~$B$, i.e., $C \simeq U_{N}(W_n,B)$,
where $N(x,y)=N_C(\varphi (x),\varphi (y))$, $x,y\in B$.

If there exists $k\in \{1,2\}$
such that $N_k(x,y)\le N (x,y)$ for all $x,y\in B$
then $C_k$ is a homomorphic image of $C$ (the homomorphism
would preserve $B$). Since $C$ is necessarily simple \citep{Ko4},
the associative envelope $(C,\varphi )$ should be isomorphic
to $(C_k,\varphi _k)$, so the representations $\varphi $ and
$\varphi _k$ are equivalent.

Hence, we have to assume that there exist
$x_1,y_1,x_2,y_2\in B$
such that
$N(x_1,y_1)< N_1(x_1,y_1)$ and $N(x_2,y_2)<N_2(x_2,y_2)$.
But $U_N(W_n,B)$ cannot be simple
for such $N$. To show that, it is sufficient to consider several cases.
Let us focus on some of these cases to show the technique.

Suppose $N(v,\xi _i)<2$ for some $i$.
Denote $\xi =\xi _i$, $\partial =\partial _i$ and
proceed as follows. Defining relations
(\ref{eqn-def.rel}) imply $\xi \oo{1} v = 2v$,
$\xi \oo{0} v - v\oo{0} \xi  = D\xi $.
For any $m\ge 0$ we have
\begin{eqnarray}\nonumber
 \xi \oo{m} \xi  &=& -\frac{1}{m+1} D\xi \oo{m+1} \xi
= -\frac{1}{m+1} (\xi \oo{0} v - v\oo{0} \xi )\oo{m+1} \xi    \\
&=& \frac{1}{m+1} v\oo{0} (\xi \oo{m+1} \xi ).\nonumber
\end{eqnarray}
Since $\xi \oo{m} \xi  = 0$ for a sufficiently large $m$,
we may conclude that $N(\xi ,\xi )=0$.
Now, multiply the defining relation
\[
 \partial \oo{0} \xi  + \xi \oo{0} \partial  - D(\xi \oo{1} \partial )
+\dots  = v
\]
with $\oo{0} \xi $ and $\xi \oo{0}$. Then
$v\oo{0}\xi =\xi \oo{0}\partial \oo{0} \xi  = \xi \oo{0} v$,
so $D\xi =0$ in $U_N(W_n,B)$. Hence, $\varphi :W_n\to C$
is not injective and $C=0$.

In the same way, one may get $C=0$ assuming that $N(v,\partial _i)<1$
for some $i\in\{1,\dots ,n\}$.

If there exist $i\ne j$ such that $N(\partial _i,\partial _j)<1$
then $N(\partial _i,\partial _j)=N(\partial _j,\partial _i)=0$.
Consider the relation
\[
 v= [\partial _i\oo{0}\xi _i] = \partial _i\oo{0} \xi _i
+ \{\xi _i\oo{0} \partial _i \}
\]
and multiply with $\oo{m} \partial _j$, $m\ge 0$.
Then $v\oo{m} \partial _j = \partial _i\oo{0} (\xi _i\oo{m} \partial _j)
= -\partial _i\oo{0} \{\partial _j\oo{m} \xi _i\} = 0$.
Hence, $N(v,\partial _j)=0$ but this case has already been explored.

Probably, the most difficult case is when
$N(\xi _i,\partial _j)<2$ and $N(\partial _l,\xi _k)<2$
for some $i,j,k,l\in \{1,\dots,n\}$.
Since $\xi _i\oo{m} \partial _j=0$ for $m\ge 1$, we have
\[
 0= \partial _i\oo{0} (\xi _i\oo{m} \partial _j)
 = (v-\{\xi_i\oo{0}\partial_i\})\oo{m} \partial_j
= v\oo{m} \partial _j - \xi _i\oo{m} (\partial _i\oo{0} \partial _j).
\]
Note that $\partial _i\oo{0}\partial _j + \{\partial _j\oo{0} \partial _i\}=0$,
so
\[
\xi _i\oo{m} (\partial _i\oo{0} \partial _j) =
 - \xi_i \oo{m} \{\partial _j\oo{0} \partial _i\}
=-\{(\xi _i\oo{m}\partial _j)\oo{0} \partial _i \}=0,
\]
therefore, $v\oo{m}\partial _j =0$ for any $m\ge 1$.

Relation $\partial _l\oo{m} \xi _k = 0$ ($m\ge 1$)
implies
\[
0=\{(\partial _l\oo{m}\xi _k)\oo{0} \partial _k\}
=\partial _l\oo{m}(v-\partial _k\oo{0} \xi _k).
\]
But
\[
\partial _l\oo{m} (\partial _k\oo{0} \xi _k)
= \{\partial _l\oo{0} \partial _k\}\oo{m} \xi _k
=-(\partial _k\oo{0} \partial _l)\oo{m} \xi _k = 0,
\]
so
$\partial _l\oo{m} v =0$ for any $m\ge 1$.

We have obtained $N(v,\partial _j)\le 1$,
$N(\partial _l, v)\le 1$. If $j=l$ then the result
is obvious; if $j\ne l$ then
$\partial _j\oo{1} v = \partial _j$,
$v\oo{1} \partial _l=\partial _l$,
$N(\partial _j,v)=2$,
and for any $m\ge 0$
\[
 \partial _l\oo{m} \partial _j
=\partial _l\oo{m} (\partial _j\oo{1} v)
=-\{\partial _j\oo{m}\partial _l\}\oo{1} v
=(-1)^{m+1} (\partial _j\oo{m+1} \partial _l)\oo{0} v.
\]
Hence, $N(\partial _j,\partial _l)=0$, but this case has already
been explored.

In the same way, all other choices of
$x_1,y_1,x_2,y_2\in B$ also lead to $C=0$.
Therefore, there are no more universally defined representations of $W_n$
with respect to~$B$.
\end{proof}

\begin{cor}
The set $S_2$ of relations
(\ref{eq-2-1}),
(\ref{eq-2-3}),
(\ref{eq-2-6})--(\ref{eq-A-6})
is a Gr\"obner--Shirshov basis of $U_{N_2}(W_n,B)$.
\end{cor}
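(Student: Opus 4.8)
The statement is an immediate application of the converse direction of the Composition--Diamond Lemma (Theorem~\ref{CDlemma}), which asserts that a set of defining relations consisting of $D$-free polynomials whose reduced normal words form a linear basis is automatically a Gr\"obner--Shirshov basis. So the plan is to verify the three hypotheses of this converse for the set $S_2$: (a) $S_2$ is a set of defining relations of $U_{N_2}(W_n,B)$; (b) every element of $S_2$ is a $D$-free conformal polynomial; (c) the $S_2$-reduced normal words form a linear basis of $U_{N_2}(W_n,B)$.

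For (a): the members (\ref{eq-2-1}), (\ref{eq-2-3}), (\ref{eq-2-6})--(\ref{eq-2-8}) of $S_2$ already occur in the initial defining set obtained from (\ref{eqn-def.rel}) via (\ref{commutator}), while (\ref{eq-A-1})--(\ref{eq-A-6}) were deduced from those relations in the Lemma, hence lie in the defining ideal; conversely, the only initial relations absent from $S_2$, namely (\ref{eq-2-2}), (\ref{eq-2-4}) and (\ref{eq-2-5}), follow from (\ref{eq-A-1}) and (\ref{eq-A-2}), as observed just before the definition of $S_2$. Thus $S_2$ generates the same ideal $I_{N_2}(B)$ of $F_{N_2}(B)$ and is indeed a set of defining relations of $U_{N_2}(W_n,B)$.

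For (b), one inspects the list: each of (\ref{eq-2-1}), (\ref{eq-2-3}), (\ref{eq-2-6})--(\ref{eq-A-6}) is a linear combination of conformal words to which no $D$ is applied externally (in particular, the $D$ implicit in $\partial_k(\xi_i\oo{1}\xi_j)$ on the right-hand side of (\ref{eq-A-6}) sits inside a subword and plays no role, since that term is $\partial_k\oo{0}(\xi_i\oo{1}\xi_j)$). For (c): the $S_2$-reduced normal words, which are exactly the words listed in (\ref{eq-red-2}), always span $F_{N_2}(B)/I_{N_2}(B)\simeq U_{N_2}(W_n,B)$ --- a normal word appearing as the principal part of $D^s(u\oo{n}f\oo{m}w)$ or $D^s(u\oo{n}g)$ with $f,g\in S_2$ is congruent modulo $I_{N_2}(B)$ to a combination of strictly smaller normal words, so iterating the reduction expresses every element in terms of reduced words. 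In the proof of Theorem~\ref{thm-univ-W} it was shown that $\varphi_2$ carries the words (\ref{eq-red-2}) to linearly independent elements of $\Bbbk[D]\otimes A_n[v]$; hence these words are linearly independent in $U_{N_2}(W_n,B)$ and form a linear basis. Applying the converse part of Theorem~\ref{CDlemma} then yields that $S_2$ is a Gr\"obner--Shirshov basis.

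The only genuine (and routine) work is the $D$-freeness check in (b) together with the ideal bookkeeping in (a); everything in (c) is already contained in the proof of Theorem~\ref{thm-univ-W}, so no substantial obstacle remains.
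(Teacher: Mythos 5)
Your proposal is correct and is exactly the argument the paper leaves implicit: the spanning of the reduced words (\ref{eq-red-2}), their linear independence established via $\varphi_2$ in the proof of Theorem~\ref{thm-univ-W}, the $D$-freeness of $S_2$, and the converse part of Theorem~\ref{CDlemma}. The only small blemish is your aside about a ``$D$ implicit in $\partial_k(\xi_i\oo{1}\xi_j)$'' in (\ref{eq-A-6}) --- that expression is just $\partial_k\oo{0}(\xi_i\oo{1}\xi_j)$ with the $\oo{0}$ omitted typographically, so there is no $D$ to worry about and the relation is $D$-free as you conclude.
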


\section{On universally defined representations of $K_n$}

Consider the linear map $\wedge_n\oplus \sum\limits_{j=1}^n \wedge_n\partial_j\to W_n$
defined by
\begin{eqnarray*}
1 && \mapsto v, \\
\partial_j && \mapsto \partial_j, \quad i=1,\dots, n, \\
\xi_I=\xi_{i_1}\dots \xi_{i_r} && \mapsto \frac{1}{2^{r-1}} \xi_{i_1}\so{1}\dots\so{1}
   \xi_{i_r}, \quad 1\le r\le n, \\
\xi_I\partial_j=\xi_{i_1}\dots \xi_{i_r}\partial_j && \mapsto
     \frac{1}{2^{r-1}} (\xi_{i_1}\so{1}\dots\so{1}\xi_{i_r})\so{1}\partial_j,
     \quad 1\le r\le n,
\end{eqnarray*}
where $I=\{i_1,\dots, i_r\}\subseteq \mathcal N := \{1,\dots, n\}$,
$i_1<\dots<i_r$,
the bracketing is assumed to be right-justified.
Since the map is injective, we will identify elements of $W_n$ with their preimages
under this map.

Conformal superalgebra $K_n$ is a subalgebra of $W_n$ generated
(as a $\Bbbk[D]$-module) by the elements
\begin{equation}\label{g_I}
g_I = (2-|I|)\xi_I + (-1)^{|I|} \sum\limits_{i=1}^n (D\xi_i\xi_I\partial_i
+ \partial_i(\xi_I)\partial_i ),
\quad I\subseteq \mathcal N.
\end{equation}

Universally defined representations of $W_n$ induce finite representations
of $K_n$:
\[
\psi_k = \varphi_k\vert_{K_n} : K_n \to C_k \simeq
\begin{cases} 
 \Cend_{2^n, Q},\ Q=\diag(v,1,\dots, 1), & \mbox{if } k=1; \cr
\Cend_{\widetilde Q, 2^n},\ \widetilde Q =\diag(1,\dots, 1,v), & \mbox{if }
    k=2.\end{cases}
\]
Let $B$ stands for the set $\{g_I\mid I\subseteq \mathcal N\}$.

\begin{thm}\label{K_n thm}
\

\begin{itemize}
\item[]{\rm(1)} $(C_1,\psi_1)$, $(C_2,\psi_2)$  are associative envelopes
  of $K_n$ for $n\ne 2$. Therefore, the induced representations are irreducible.
\item[]{\rm(2)} If $n=1$ then the envelopes $(C_1,\psi_1)$ and $(C_2,\psi_2)$ are
  isomorphic. Moreover, the corresponding representation of the
  Neveu--Schwartz conformal superalgebra $K_1$ is universally defined with
  respect to $B$.
\item[]{\rm(3)} If $n>2$ then neither of $\psi_1$, $\psi_2$
is a universally defined representation with respect to $B$.
  \end{itemize}
\end{thm}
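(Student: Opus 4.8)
The plan is to treat the three parts separately, since they require rather different techniques. For part (1), the key observation is that $\psi_k = \varphi_k|_{K_n}$ is already known to be a well-defined homomorphism of conformal Lie superalgebras (being the restriction of $\varphi_k$ from Proposition~\ref{prop-embedding}), so the only thing to verify is that $C_k$ is actually generated, as an associative conformal algebra, by $\psi_k(K_n)$ — that is, by the images $\varphi_k(g_I)$ of the generators \eqref{g_I}. Once this is established, Theorem~\ref{thm-univ-W} already identifies $C_k$ with a simple conformal algebra of linear growth, and the discussion in Section~4 (the results of \citep{Ko4,Re1,Ko3}) gives that a simple associative envelope of linear growth yields an irreducible finite representation. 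So for part~(1) I would compute $\varphi_k(g_I)$ explicitly in $\Bbbk[D]\otimes A_n[v]$ using the identifications from Proposition~\ref{prop-embedding}, and check that the conformal subalgebra they generate is all of $C_k$. The condition $n\ne 2$ should enter precisely here: for $n=2$ the element $g_{\mathcal N}$ with $|I|=2$ has vanishing leading coefficient $(2-|I|)=0$, which collapses the top Grassmann component and breaks the generation argument.

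For part (2), with $n=1$ one has only $g_\emptyset$ and $g_{\{1\}}$, and the envelopes $C_1\simeq\Cend_{2,Q}$, $C_2\simeq\Cend_{\widetilde Q,2}$ with $Q=\diag(v,1)$, $\widetilde Q=\diag(1,v)$ are isomorphic as conformal algebras (indeed both equal $\Cend_{2}$-type algebras, conjugate by the permutation matrix swapping the two basis vectors $e_1=1$, $e_2=\xi_1$ of $\wedge_1$). The point is to check that this conjugation carries $\psi_1(K_1)$ to $\psi_2(K_1)$, i.e.\ is compatible with the maps on generators; this is a direct computation with two elements. For the ``universally defined'' claim, I would produce a Gr\"obner--Shirshov basis for $U_N(K_1,B)$ where $N$ is the locality induced by $\psi_1$, compute the $S$-reduced normal words, map them into $C_1$ via the homomorphism $U_N(K_1,B)\to C_1$ extending $\psi_1$, and verify linear independence of the images — exactly the pattern used for $(C_2,\varphi_2)$ in the proof of Theorem~\ref{thm-univ-W}. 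This identifies $U_N(K_1,B)\simeq C_1$, which is the definition of universally defined. (Here $K_1$ being the Neveu--Schwartz algebra has a small, well-understood presentation, so the Buchberger--Shirshov computation terminates.)

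Part (3) is the hard part, and it is a non-vanishing (non-simplicity) statement for $n>2$: one must show that for \emph{neither} choice $k=1,2$ does $U_{N_k}(K_n,B)$ coincide with $C_k$, equivalently that $U_{N_k}(K_n,B)$ fails to be simple (so its simple quotient $C_k$ is a proper image). The approach is to exhibit, for each $k$, a nontrivial relation that holds in $C_k$ but cannot be derived from the defining relations of $K_n$ together with the locality bounds $N_k$ — in other words, a composition of the defining relations of $U_{N_k}(K_n,B)$ whose remainder is an $S$-reduced word mapping to zero in $C_k$. Concretely I would pick a generator $g_I$ with $|I|\ge 3$ (this is where $n>2$ is forced) and run a few steps of the Buchberger--Shirshov algorithm for conformal algebras \citep{BFK2} on the relations obtained by rewriting \eqref{g_I} and the commutation relations of $W_n$; the extra terms $(D\xi_i\,\xi_I\partial_i + \partial_i(\xi_I)\partial_i)$ in $g_I$, which are absent when $|I|\le 1$, produce compositions whose reductions are genuinely new in $U_{N_k}$ but trivial in the matrix realization $C_k$. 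The main obstacle, and the place where real computation is unavoidable, is organizing these compositions so that one can actually certify that the offending relation is \emph{not} a consequence of the $U_{N_k}(K_n,B)$-relations — i.e.\ that it is not already zero there; I would do this by keeping track of a leading-word/weight estimate showing the reduced word in question is $S$-reduced (hence a basis element of $U_{N_k}$ by Theorem~\ref{CDlemma}) while its $\psi_k$-image vanishes. A cleaner alternative, which I would use if the direct composition count becomes unwieldy, is a dimension/growth comparison: estimate from below the number of $S$-reduced words of $U_{N_k}(K_n,B)$ in each $D$-degree using the initial presentation, and compare with the (computable) dimensions of the graded components of $C_k=\Cend_{2^n,Q}$ or $\Cend_{\widetilde Q,2^n}$ restricted to the subalgebra generated by $\psi_k(K_n)$; a strict inequality in some degree forces $U_{N_k}(K_n,B)\not\simeq C_k$.
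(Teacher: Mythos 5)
Your plans for parts (1) and (2) essentially match the paper's proof. For (1) the paper computes $\psi_k(g_I)$ explicitly in $\Bbbk[D]\otimes A_n[v]$ and runs a downward induction on $|I|$ showing that all $\xi_I\partial_j$ and then $(-D+v)\xi_I$ lie in the subalgebra generated by $\psi_1(K_n)$; the obstruction at $n=2$ is indeed the factor $2-n$, though it enters through the coefficients of products such as $\psi_1(g_\emptyset)\oo{2}\psi_1(g_{\mathcal N\setminus\{i\}})=(-1)^{n-i}(4-2n)\xi_{\mathcal N}\partial_i$ rather than through the leading coefficient $2-|I|$ of $g_{\mathcal N}$ as you guessed (that coefficient vanishes for $|I|=2$ at every $n\ge 2$). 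For (2) the paper invokes an earlier Gr\"obner--Shirshov computation for $U_{N_1}(K_1,B)\simeq C_1$ and then obtains $(C_1,\psi_1)\simeq(C_2,\psi_2)$ from universality alone: both envelopes induce the same locality function on $B$ and $C_1$ is simple, so $(C_2,\psi_2)$, being a nonzero homomorphic image of the universal envelope $(C_1,\psi_1)$, must coincide with it. This is cleaner than your proposed explicit conjugation (which you would still have to verify on the generators), but not essentially different in substance.

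The genuine gap is part (3). Your primary plan --- find a composition whose reduction is a nonzero $S$-reduced word of $U_{N_k}(K_n,B)$ mapping to zero in $C_k$ --- hinges on certifying that the offending element is \emph{nonzero} in $U_{N_k}(K_n,B)$, and you offer no mechanism for that short of completing the whole Gr\"obner--Shirshov basis; you acknowledge this yourself. Your fallback is logically backwards: when $S$ is only the initial set of defining relations and not known to be a GSB, the $S$-reduced words merely \emph{span} $U_{N_k}(K_n,B)$ (Theorem~\ref{CDlemma} gives a basis only when $S$ is closed under compositions), so counting them bounds the graded dimensions from \emph{above}; an excess of reduced words over the dimensions of $C_k$ proves nothing, and a genuine lower bound on $U_{N_k}(K_n,B)$ would require a representation strictly larger than $C_k$, which is exactly what is in question. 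The paper sidesteps all of this with a structural argument you did not find: the induced locality function satisfies $N(a,b)=N(b,a)$ for all $a,b\in B$, so by Lemma~\ref{lem lifting} the superinvolution $x\mapsto -x$ of $K_n$ lifts to $U_N(K_n,B)$; on the other hand, Lemma~\ref{superinv} shows that $\Cend_{2^n,Q}$, $Q=\diag(v,1,\dots,1)$, admits no superinvolution for $n>1$ (via the decomposition $C_0=C_{00}\oplus C_{01}$ and the facts from \citep{BKL1} that $\Cend_{N,Q}\not\simeq\Cend_N$ for non-invertible $Q$ and that $\Cend_{N,Q}$ has no involutions for $N>1$). Hence $U_N(K_n,B)\not\simeq C_k$ for $n>2$, with no composition computations at all.
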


\begin{proof}
(1)
Let us compute the images of $g_I$ under $\psi_k$, $k=1,2$, as elements
of $\Bbbk[D]\otimes A_n[v]$:
\begin{eqnarray}
& \psi_1(g_I)  =
(2-|I|)(v-D)\xi_I + (-1)^{|I|} \sum\limits_{i=1}^n
\big (D\xi_i\xi_I\partial_i + \partial_i (\xi_I)\partial_i \big),
   \label{g_I-1} \\
& \psi_2(g_I)  =
(2-|I|)v\xi_I - \sum\limits_{i=1}^n \big(
D\partial_i\xi_i\xi_I + \partial_i\cdot\partial_i(\xi_I)\big ).
   \label{g_I-2}
\end{eqnarray}

For a subset $I\subseteq \mathcal N$ and an index $i\in \mathcal N$,
set
\[
\alpha(i,I) = \begin{cases}   0, & i\in I, \cr
            (-1)^{|\{j\in I\mid j<i\}|}, & i\notin I.
 \end{cases}
\]
Then $\xi_i\xi_I  = \alpha(i,I)\xi_{I\cup\{i\}}$,
$\partial_i(\xi_{I\cup\{i\}}) = \alpha(i,I)\xi_I$.
It is also easy to observe that
\[
\xi_i\partial_i \xi_J = \begin{cases}
   \xi_J, & i\in J, \\
  (-1)^{|J|}\alpha(i,J) \xi_{J\cup\{i\}}\partial_i, & i\notin J.
\end{cases}
\]

Consider the associative conformal algebra $B_n$
generated by $\psi_1(K_n)$ in $C_1$ for $n\ne 2$.
It is straightforward to compute that
\begin{eqnarray}
& \psi_1(g_\emptyset)\oo{2} \psi_1(g_{\mathcal N\setminus\{i\}})
 = (-1)^{n-i}(4-2n) \xi_{\mathcal N}\partial_i.
                                        \label{L-G} \\
& \psi_1(g_{\{i\}}) \oo{0} \xi_{J\cup\{i\}}\partial_k
  = - \alpha(i,J)\xi_J\partial_k - (-1)^{|J|+1} \xi_{J\cup \{i\}}\partial_i\partial_k.
  \label{G-G}
\end{eqnarray}
It follows from (\ref{L-G}) that
$\xi_{\mathcal N}\partial_i \in B_n$ for any $i\in \mathcal N$.

Let us show by induction on $|I|$ that
$\xi_I\partial_j$ belongs to $B_n$ for any
$I\subseteq \mathcal N$, $j\in \mathcal N$.
For $|I|=N$ we are done.
For a smaller $I$, assume that $\xi_J \partial_i \in B_n$ for all
$i\in \mathcal N$ and for all
$J\subseteq \mathcal N$ such that $|J|>|I|$. In order to show
that $\xi_I\partial_j\in B_n$ one has to consider two cases:
$j\notin I$ and $j\in I$.

If $j\notin I$ then $\xi_{I\cup\{j\}}\partial_j \in B_n$
by the induction assumption, so by (\ref{G-G})
\[
B_n\ni \psi_1(g_{\{j\}})\oo{0} \xi_{I\cup\{j\}}\partial_j
= - \alpha(j,I)\xi_I \partial_j.
\]
Hence, $\xi_I \partial_j\in B_n$.

If $j\in I$ then denote $I_j = I\setminus \{j\}$,
 $I_j^k =(I\setminus \{j\})\cup \{k\}$, and consider
\[
a(I,j):= (-1)^{|I|-1}\psi_1(g_\emptyset)\oo{2} \psi_1(g_{I_j}) =
 \psi_1(g_\emptyset) \oo{2}
  \sum\limits_{k=1}^n D(\xi_k\xi_{I_j }\partial_k).
\]
Since $a(I,j)\in B_n$ by the induction assumption, we have
\begin{eqnarray}\nonumber
a(I,j) &=& \sum\limits_{k=1}^n
 \left [
4\alpha(k,I_j) \xi_{I_j^k}\partial_k
  - 2\alpha(k,I_j)\sum\limits_{i=1}^n \xi_i\partial_i\xi_{I_j^k}\partial_k
 \right]                        \\
\nonumber &=&
 \sum\limits_{k=1}^n 2\alpha(k, I_j)
 \left[
 2\xi_{I_j^k}\partial_k - \sum\limits_{i\in I_j^k}\xi_{I_j^k}\partial_k
 -\sum\limits_{i\notin I_j^k}
   (-1)^{|I|}\xi_{I_j^k\cup \{i\}}\partial_i\partial_k
 \right]                        \\
&=&
\sum\limits_{k=1}^n 2\alpha(k, I_j)
 \left[
 (2-|I|)\xi_{I_j^k}\partial_k
 -
 \sum\limits_{i\notin I_j^k}
   (-1)^{|I|}\xi_{I_j^k\cup \{i\}}\partial_i\partial_k
 \right].
                          \label{a(i,J)}
\end{eqnarray}
Recall that $\xi_{I_j^k\cup \{i\}}\partial_k \in B_n$
for any $i\notin I_j^k$.
Now,
\[
B_n \ni g_{\{i\}}\oo{0} \xi_{I_j^k\cup \{i\}}\partial_k
= - \alpha(i,I_j^k) \xi_{I_j^k}\partial_k - (-1)^{|I|+1}
\xi_{I_{j}^k\cup \{i\}}\partial_i\partial_k.
\]
Hence,
\begin{equation}
\xi_{I_j^k\cup \{i\}} \partial_i\partial_k \equiv
(-1)^{|I|}\alpha (i, I_j^k) \xi_{I_j^k}\partial_k  \pmod{B_n}
\label{eq-comp}
\end{equation}
for any $i\notin I_j^k$.
Substitute the last relation into (\ref{a(i,J)}) and obtain
\[
a(I,j) \equiv 2\sum\limits_{k=1}^n
 \alpha (k, I_j) (2-n) \xi_{I_j^k}\partial_k   \pmod{B_n},
\]
so
\begin{equation}
\sum\limits_{k=1}^n
 \alpha (k, I_j) \xi_{I_j^k}\partial_k \in B_n.
                            \label{eq-xi1}
\end{equation}
Note that for any two different $k_1,k_2\notin I_j$
(such a pair exists since $|I|<n$) we have
$k_1\notin I_j^{k_2}$ and $k_2\notin I_j^{k_1}$.
So by (\ref{eq-comp}) we have
\begin{eqnarray}\nonumber
\alpha (k_1,I_j^{k_2})\xi_{I_j^{k_2}}\partial_{k_2}
 & \equiv&
(-1)^{|I|} \xi_{I_j\cup \{k_1,k_2\}} \partial_{k_1}\partial_{k_2}  \\
&=&
(-1)^{|I|+1} \xi_{I_j\cup \{k_1,k_2\}} \partial_{k_2}\partial_{k_1}
\equiv
-\alpha (k_2,I_j^{k_1})\xi_{I_j^{k_1}}\partial_{k_1}
                                             \pmod{B_n}.
                                             \nonumber
\end{eqnarray}
It is easy to observe that
\[
\alpha(k_1, I_j^{k_2}) = \begin{cases}
             \alpha(k_1, I_j), & k_1< k_2, \\
             -\alpha(k_1, I_j), & k_1> k_2.
\end{cases}
\]
Hence, all terms in (\ref{eq-xi1}) are equal modulo $B_n$, so
for any $k\notin I_j$ we have
\[
(n-|I|+1) \xi_{I_j^k}\partial_k \in B_n.
\]
In particular, for $k=j$ we have the required relation
$\xi_I\partial_j \in B_n$.

We have proved that all elements of the form $\xi_I\partial_j$,
$I\subseteq \mathcal N \ni j$,
belong to $B_n$. It remains to show that $(-D+v)\xi_I\in B_n$,
$I\subseteq \mathcal N$. It is enough
to consider $I=\emptyset $ and $|I|=1$.

Since $\xi_i\partial_i \in B_n$, we also have
\[
 -D+v = \frac{1}{2}\left (\psi_1(g_\emptyset) - \sum\limits_{i=1}^n
  D(\xi_i\partial_i) \right )\in B_n.
\]
Moreover,
$\psi_1(g_{\{k\}}) \equiv (-D+v)\xi_k   \pmod{B_n}$,
so
$(-D+v)\xi_k \in B_n$ for any $k\in \mathcal N$.

Therefore, the image of $K_n$ under $\psi_1$ generates the entire algebra
$C_1$. For the representation $\psi_2$ the proof is completely analogous.

(2) It was found in \citep{Ko1} that the universal envelope
$U_{N_1}(K_1,B)$ (where $N_1$ is the locality function induced by $\psi_1$)
is isomorphic to $C_1$. Hence, the associative envelope
$(C_1,\psi_1)$ corresponds to a universally defined representation.
Later we will show that this is not the case for $K_n$, $n\ge 2$.

Since $N_{C_2}(\psi_2(a),\psi_2(b))=N_1(a,b)$
for any $a,b\in B$,
the associative envelope $(C_2,\psi_2)$ has to be isomorphic to
$(C_1,\psi_1)$.

(3) It is easy to note that
for any $I\subseteq \mathcal N$
\begin{equation}\label{G-dif}
\psi_2(g_I) = \psi_1(g_I) - (n-2)D\xi_I.
\end{equation}
Hence, for $n=2$ the representations $\psi_1$ and $\psi_2$ of $K_2$
coincide, so the representation obtained is not an irreducible one.

It is not clear whether $(C_1,\psi_1)\simeq (C_2,\psi_2)$ for
$n>2$, but in any case neither of these envelopes is a universal
one. Let us compute the induced locality functions $N_k(a,b)=
N_{C_k}(\psi_k(a), \psi_k(b))$, $k=1,2$, $a,b\in B$. It is
straightforward to check that $N_1\equiv N_2\equiv N$, where
\begin{equation}
 N(g_I,g_J) = \begin{cases} 3, & I\cap J=\emptyset,\ |I\cup J|\le n-1, \cr
  2, & |I\cap J|=1 \mbox{ or } I\cap J=\emptyset,\, |I\cup J|=n, \cr
  1, & |I\cap J|=2, \cr
  0, & |I\cap J|\ge 3.
\end{cases}
\end{equation}
In particular, $N(a,b)=N(b,a)$ for all $a,b\in B$.

\begin{lem}\label{superinv}
Conformal algebra $C =\Cend_{2^n,Q}$, $Q=\diag(v,1,\dots,1)$,
has no superinvolutions if $n>1$.
\end{lem}

\begin{proof}
Consider the $\Zset_2$-grading $V=V_0\oplus V_1$ on $V=\Bbbk[D]\otimes \Bbbk^{2^n}$
that induces the $\Zset_2$-grading on $\Cend_{2^n}\simeq \Bbbk[D]\otimes A_n[v]$.
This is exactly the
canonical grading on $\wedge_n$.
Denote
\begin{eqnarray}
\nonumber
& C_{00} = \{ a\in C_0 \mid a\oo{m} V_1 = 0\, \forall m\ge 0\}, \\
& C_{01} = \{ a\in C_0 \mid a\oo{m} V_0 = 0\, \forall m\ge0\}. \nonumber
\end{eqnarray}
It is clear that $C_{00} \simeq \Cend_{2^{n-1},Q}$,
$C_{01}\simeq \Cend_{2^{n-1}}$, $C_0=C_{00}\oplus C_{01}$.
Let $\pi $ stands for the projection of $C_0$ onto $C_{01}$.

Suppose $\sigma $ is a superinvolution of $C$. Note that
$I=\pi(\sigma (C_{00}))$ is an ideal of $C_{01}$.
Hence, either $I=0$ or $I=C_{01}$.
In the first case $\sigma\vert_{C_{00}}$ is an involution of $C_{00}$.
In the last case $\sigma\pi $ is an isomorphism of $C_{00}$ and $C_{01}$.
But it was shown in \citep{BKL1} that $\Cend_{N,Q}\not\simeq \Cend_N$
if $Q$ is not invertible, and $\Cend_{N,Q}$, $Q=\diag (v,1,\dots, 1)$
has no involutions for $N>1$.
\end{proof}

It remains to apply Lemma \ref{lem lifting} to show that neither of
$\psi_k$, $k=1,2$, is a universally defined representation of $K_n$
with respect to $B$.
\end{proof}

\section*{Acknowledgements}
This work is partially supported by RFBR 05--01--00230,
Complex Integration Program SB RAS (2006--1.9) and
SB RAS grant for young researchers (Presidium SB RAS, act N.29 of
January 26, 2006).
The author gratefully acknowledges the support of the Pierre Deligne fund
based on his 2004 Balzan prize in mathematics.

The main results of this paper were presented on the Seventh Asian Symposium on
Computer Mathematics in Seoul (December 8--10, 2005). The author is very grateful to
Hyungju Park and Seok Jin Kang for their support in attending the Symposium.


\begin{thebibliography}{99}

\bibitem{B76}
Bokut, L.~A., 1976.
Imbeddings into simple associative
algebras [Russian], Algebra i Logika, {\bf 15}, 117--142,

\bibitem{BFK1}
Bokut, L.~A., Fong,~Y., Ke,~W.-F., 2000.
Gr\"obner--Shirshov bases and composition lemma for associative
conformal algebras: an example,
Contemporary Math. {\bf 264}, 63--90.

\bibitem{BFKK}
Bokut, L.~A., Fong,~Y., Ke,~W.-F., Kolesnikov,~P.~S., 2000.
Gr\"obner and Gr\"obner--Shirshov bases in algebra and conformal algebras
[Russian], Fundam. Prikl. Mat. {\bf 6} (3), 669--706.

\bibitem{BFK2}
Bokut, L.~A., Fong,~Y., Ke,~W.-F., 2004.
Composition-Diamond lemma for associative conformal algebras,
J. Algebra {\bf 272} (2), 739--774.

\bibitem{Bor}
Borcherds, R.~E., 1986.
Vertex algebras, Kac-Moody algebras, and the Monster,
Proc. Nat. Acad. Sci. U.S.A. {\bf 83}, 3068--3071.

\bibitem{BKL1}
Boyallian, C., Kac, V.~G., Liberati, J.~I., 2003.
On the classification of subalgebras of $\Cend_N$ and $\gc_N$,
J.~Algebra {\bf 260} (1), 32--63.

\bibitem{Buch}
Buchberger, B., 1970.
An algorithmical criteria for the solvability of algebraic systems of equations
[German]. Aequationes Math. {\bf 4}, 374--383.

\bibitem{CK2}
Cheng, S.-J., Kac, V.~G., 1997.
A new $N=6$ superconformal algebra,
Commun. Math. Phys. {\bf 186}, 219--231.

\bibitem{CK}
Cheng S.-J., Kac, V.~G., 1997.
Conformal modules,  Asian J. Math. {\bf 1}, 181--193.

\bibitem{CKW2}
Cheng, S.-J., Kac, V.~G., Wakimoto, M., 1998.
Extensions of conformal modules,
in: Topological field theory, primitive forms and related topics (Kyoto, 1996), 79--129,
Progr. Math. {\bf 160}, Birkhauser, Boston, MA.

\bibitem{CKW1}
Cheng, S.-J., Kac, V.~G., Wakimoto, M., 2000.
Extensions of Neveu-Schwarz conformal modules,
J. Math. Phys. {\bf 41} (4), 2271--2294.

\bibitem{CL}
Cheng, S.-J., Lam,~N., 2001.
Finite conformal modules over $N=2,3,4$ superconformal algebras,
J. Math. Phys. {\bf 42} (2), 906--933.

\bibitem{DK}
D'Andrea, A., Kac, V.~G., 1998.
Structure theory of finite conformal algebras,
Sel. Math., New Ser. {\bf 4}, 377--418.

\bibitem{FK}
Fattori, D., Kac, V.~G., 2002.
Classification of finite simple Lie conformal superalgebras,
J. Algebra {\bf 258} (1), 23--59.

\bibitem{FKR}
Fattori,~D., Kac,~V.~G., Retakh,~A.,
Structure theory of finite Lie conformal superalgebras,
in: H.-D. Doebner and V.K.Dobrev (Eds.), Lie Theory and its Applications in Physics V,
World Sci., 2004, 27--64.

\bibitem{FLM}
Frenkel, I.~B., Lepowsky, J., Meurman, A., 1998.
Vertex operator algebras and the Monster, Pure and Applied Math. V.~134, Academic Press.

\bibitem{K1}
Kac, V.G., 1997.
Vertex algebras for beginners, Univ. Lect. Series {\bf 10},
AMS, Providence, RI.

\bibitem{K2}
Kac, V.G., 1999.
Formal distribution algebras and conformal algebras,
in: XII-th International Congress in Mathematical Physics (ICMP'97), Brisbane,
Internat. Press, Cambridge, MA, pp.~80--97.

\bibitem{K3}
Kac, V.~G., 2002.
Classification of supersymmetries. Proc. of the International Congress of Mathematicians, Vol. I
(Beijing, 2002), 319--344, Higher Ed. Press, Beijing.

\bibitem{KL}
Kang, S.-J., Lee, K.-H., 2000.
Gr\"obner--Shirshov bases for representation theory,
J. Korean Math. Soc. {\bf 37} (1), 55--72.

\bibitem{Ko1}
Kolesnikov, P.~S., 2002.
Universal representations of some Lie conformal superalgebras
[Russian], Vestnik Quart. J. of Novosibirsk State Univ.,
Series: math., mech. and informatics {\bf 2} (3), 30--45.

\bibitem{Ko2}
Kolesnikov, P.~S., 2004.
Gr\"obner--Shirshov bases of universal enveloping simple conformal Lie superalgebras of the series
$W\sb N$, Algebra Logic {\bf 43} (2), 109--122.

\bibitem{Ko3}
Kolesnikov, P.~S., 2006.
Simple associative conformal algebras of linear growth,
J. Algebra {\bf 295} (1), 247--268.

\bibitem{Ko4}
Kolesnikov, P.~S., 2006.
Associative conformal algebras with finite faithful representation,
Adv. Math. {\bf 202} (2), 602--637.

\bibitem{Ro1}
Roitman, M., 1999.
On free conformal and vertex algebras,
J. Algebra {\bf 217} (2), 496--527.

\bibitem{Ro2}
Roitman, M., 2000.
Universal enveloping conformal algebras,
Sel. Math., New Ser., {\bf 6} (3), 319--345.

\bibitem{Re1}
Retakh, A., 2001.
Associative conformal algebras of linear growth,
J. Algebra {\bf 237} (2), 769--788.

\bibitem{Sh62}
Shirshov, A.~I., 1962.
Some algoritmic problems for Lie algebras [Russian],
Sib. Mat. Z.,  {\bf 3}, 292--296;
English translation: ACM SIGSAM Bulleten, 33 (1999).


\end{thebibliography}
\end{document}